\def\ds{\displaystyle}
\def\O{\Omega}
\def\t{\theta}
\def\mV{\mathcal{V}}
\renewcommand\sp{\mathop{\mathrm{Sp}}\nolimits}
\newtheorem{remark}{Remark}[section]
\newtheorem{lemma}{Lemma}[section]
\newtheorem{theorem}{Theorem}[section]
\newtheorem{corollary}{Corollary}[section]
\newcommand{\set}[1]{\lbrace #1 \rbrace}
\newcommand{\norm}[1]{\lVert#1\rVert}
\newcommand\bu{\boldsymbol{u}}
\newcommand\bv{\boldsymbol{v}}
\def\hdel{\widehat{\delta}}
\def\CM{\mathcal{X}}
\def\CN{\mathcal{Y}}
\newcommand\bI{\boldsymbol{I}}
\newcommand\bT{\boldsymbol{T}}
\newcommand\0{\mathbf{0}}
\newcommand\bxi{\boldsymbol{\xi}}
\def\CT{{\mathcal T}}
\newcommand{\dd}{\texttt{d}}
\newcommand\bsig{\boldsymbol{\sigma}}
\newcommand\btau{\boldsymbol{\tau}}
\newcommand\R{\mathbb{R}}
\renewcommand\H{\mathrm{H}}
\renewcommand\L{\mathrm{L}}
\renewcommand\O{\Omega}
\newcommand\DO{\partial\O}
\newcommand\bdiv{\mathop{\mathbf{div}}\nolimits}
\renewcommand\div{\mathop{\mathrm{div}}\nolimits}
\newcommand\tr{\mathop{\mathrm{tr}}\nolimits}
\renewcommand\sp{\mathop{\mathrm{sp}}\nolimits}
\renewcommand\t{\mathtt{t}}
\newcommand\LO{\L^2(\O)}
\newcommand\HsO{\H^s(\O)}
\newcommand\nxn{n\times n}
\renewcommand\t{\mathtt{t}}
\newcommand{\vertiii}[1]{{\left\vert\kern-0.25ex\left\vert\kern-0.25ex\left\vert #1 
    \right\vert\kern-0.25ex\right\vert\kern-0.25ex\right\vert}}
\begin{document}

\title[Mixed methods for the Stokes spectral problem]
{Mixed methods for the velocity-pressure-pseudostress formulation of the Stokes eigenvalue problem}


\author{Felipe Lepe}
\address{GIMNAP-Departamento de Matem\'atica, Universidad del B\'io - B\'io, Casilla 5-C, Concepci\'on, Chile.}
\email{flepe@ubiobio.cl}
\thanks{The first author was partially supported by
ANID-Chile through FONDECYT project 11200529 (Chile).}

\author{Gonzalo Rivera}
\address{Departamento de Ciencias Exactas,
Universidad de Los Lagos, Casilla 933, Osorno, Chile.}
\email{gonzalo.rivera@ulagos.cl}
\thanks{The second author was supported by
ANID-Chile through FONDECYT project 11170534 (Chile).}

\author{Jesus Vellojin}
\address{Departamento de Matem\'atica,
Universidad T\'ecnica Federico Santa Mar\'ia, Valpara\'iso, Chile.}
\email{jesus.vellojinm@usm.cl}
\thanks{The third author was partially supported by ANID-Chile through FONDECYT project 1181098 (Chile).}


\subjclass[2000]{Primary 35Q35,  65N15, 65N25, 65N30, 65N50, 76D07}

\keywords{Stokes equations, eigenvalue problems,  error estimates}

\begin{abstract}
In two and three dimensional domains, we analyze mixed finite element methods for a velocity-pressure-pseudostress formulation
of the Stokes eigenvalue problem. The methods consist in  two schemes:  the velocity and pressure are approximated with piecewise polynomial and for the pseudostress we consider two classic families of finite elements for $\H(\div)$ spaces: the Raviart-Thomas and the Brezzi-Douglas Marini elements.
 With the aid of the classic spectral theory for compact operators, we prove that our method does not introduce spurious modes. Also, we obtain convergence and error estimates for the proposed methods. In order to assess the performance of the schemes, we report numerical results to compare the accuracy and robustness between both numerical schemes.
\end{abstract}

\maketitle

\section{Introduction}\label{sec:intro}
The Stokes problem is a system of equations that describes the motion of a certain fluid. For a given domain $\O\subset\mathbb{R}^n$, where $n\in\{2,3\}$ with Lipschitz boundary, we are interested in the Stokes eigenvalue problem.
\begin{equation}\label{def:stokes_source}
\left\{
\begin{array}{rcll}
-2\mu\Delta\bu+\nabla p & = & \lambda\bu&  \text{ in } \quad \Omega, \\
\div\bu & = & 0 & \text{ in } \quad \Omega, \\
\bu & = & \boldsymbol{0} & \text{ on } \quad \partial\Omega,
\end{array}
\right.
\end{equation}
 where is $\mu$ is the kinematic viscosity,  $\bu$ is the velocity and $p$ is the pressure. 
 
 It is well known that mixed formulations are a suitable alternative to analyze different problems, since the introduction of
 additional unknowns with physical meaning, allows to obtain more information for certain phenomenons. Hence, the design 
 of finite element approximations has been an important subject of study for mathematicians and engineers, where several 
 families of mixed elements have been developed. For a complete state of art about mixed methods we resort to \cite{MR3097958}.
 
 In particular, mixed formulations for eigenvalue problems has been well developed in the past years and the theory to
 study these problems can be found in \cite{MR2652780,MR606505}, just for mention the more classic references. On the other hand, concrete applications for mixed formulations
 in spectral problems can be found in different contexts as, for instance,  \cite{MR2220929,MR2559736,MR4071826,MR3864690,MR3962898,MR3451491}, where several tools 
 have been implemented as DG methods, VEM methods, FEM, and a posteriori analysis.
 
In the present work, we consider  a tensorial formulation for the Stokes spectral problem. This type of formulation naturally arise when we are interested in
the computation of the stress.

More precisely, we will study the Stokes eigenvalue problem with the mixed formulation proposed in \cite{MR2594823} for the source problem where, not only the velocity and the pressure are the unknowns as in \eqref{def:stokes_source}, but also the pseudostress tensor (see \cite{cai2010} for further details related to this tensor). 
With these formulation, clearly the computational costs for the numerical methods increment compared with the classic velocity-pressure formulation, since we need to approximate each component of the pseudostress, each component of the velocity an the scalar 
associated to the pressure. However, this tensor is an interesting unknown since allows to compute other quantities of interest. For example, in the Stokes flow problems, the pseudostress  relates the classic stress and the gradient of the velocity. Hence, with an accurate approximation of the pseudostress we are able to obtain accurate values for these other relevant unknowns.

One of the motivations to analyze the mixed formulation of our work is that  allows to deal with eigenfunctions that present a poor regularity compared with those of the classic velocity-pressure formulation, which is a clear advantage when functions of these nature are presented in real applications. Moreover, mixed formulations are flexible in the choice of finite element families to approximate the space $\H(\div)$. In our case, we will consider two families: 
the Raviart-Thomas elements and the Brezzi-Douglas-Marini elements (see for instance \cite{MR799685,MR0483555}). The aim is to compare the accuracy of these inf-sup stable finite elements to approximate the eigenfunctions and eigenvalues of \eqref{def:stokes_source}. It is well known that BDM schemes are more expensive than RT schemes, which immediately give us the computational costs as first main difference. However, in eigenvalue problems, the orders of convergence and accuracy for the approximation of the spectrum of the solution operators can benefited with more expensive elements. Also, in the present work we perform a theoretical and computational analysis for high order mixed methods for both families of finite elements, which becomes an important feature to compare.

The paper is organized as follows: in Section \ref{sec:model_problem} we introduce the Stokes eigenvalue problem, 
for two and three dimensions, together with the pseudostress tensor. With suitable Hilbert spaces we derive a variational formulation for \eqref{def:stokes_source} where the main unknowns are the pseudostress, the velocity and the pressure. We introduce the corresponding solution operators and present an additional regularity result for the eigenfunctions. Finally, a spectral characterization is deduced. Section \ref{sec:mixed_fem} is the core of our paper, where we introduce the finite element schemes of our analysis. We prove the stability for the discrete eigenvalue problem. Also we introduce the discrete solution operator. In Section \ref{sec:conv} we perform the spectral analysis, where convergence and error results for the eigenfunctions and eigenvalues  are proved. Finally, in Section \ref{sec:numerics} we report a series of numerical tests where we confirm our theoretical results, together with a comparison between the mixed schemes of our paper. 

We end this section with some of the notations that we will  use below. Given
any Hilbert space $X$, let $X^2$ and $\mathbb{X}$ denote, respectively,
the space of vectors and tensors  with
entries in $X$. In particular, $\mathbb{I}$ is the identity matrix of
$\R^{n\times n}$, and $\mathbf{0}$ denotes a generic null vector or tensor. 
Given $\btau:=(\tau_{ij})$ and $\bsig:=(\sigma_{ij})\in\R^{n\times n}$, 
we define, as usual, the transpose tensor $\btau^{\t}:=(\tau_{ji})$, 
the trace $\tr\btau:=\sum_{i=1}^n\tau_{ii}$, the deviatoric tensor 
$\btau^{\dd}:=\btau-\frac{1}{n}\left(\tr\btau\right)\mathbb{I}$, and the
tensor inner product $\btau:\bsig:=\sum_{i,j=1}^n\tau_{ij}\sigma_{ij}$. 

Let $\O$ be a polygonal Lipschitz bounded domain of $\R^n$ with
boundary $\DO$. For $s\geq 0$, $\norm{\cdot}_{s,\O}$ stands indistinctly
for the norm of the Hilbertian Sobolev spaces $\HsO$, $\HsO^n$ or
$\mathbb{H}^s(\O)$ for scalar, vectorial and tensorial fields, respectively, with the convention $\H^0(\O):=\LO$, $[\H^0(\O)]^{n}=[\L^2(\O)]^n$ and $\mathbb{H}^0(\O):=\mathbb{L}^2(\O)$. We also define for
$s\geq 0$ the Hilbert space 
$\mathbb{H}^{s}(\bdiv;\O):=\set{\btau\in\mathbb{H}^s(\O):\ \bdiv\btau\in[\HsO]^n}$, whose norm
is given by $\norm{\btau}^2_{\mathbb{H}^s(\bdiv;\O)}
:=\norm{\btau}_{s,\O}^2+\norm{\bdiv\btau}^2_{s,\O}$.

The relation $\texttt{a} \lesssim \texttt{b}$ indicates that $\texttt{a} \leq C \texttt{b}$, with a positive constant $C$ which is 
independent of $\texttt{a}$ and  $\texttt{b}$.

\section{The model problem}
\label{sec:model_problem}

Let $\O$ be a bounded simply connected polygonal domain with boundary $\partial\O$. We introduce the pseudostress tensor $\bsig:=2\mu\nabla\bu-p\mathbb{I}$. Hence, system \eqref{def:stokes_source} is rewritten as follows:
\begin{equation}
\label{def:stokes_eigen}
\left\{
\begin{array}{rccc}
\bdiv\bsig&=&-\lambda\bu&\quad\text{in}\,\O,\\
\bsig-2\mu\nabla\bu+p\mathbb{I}&=&0&\quad\text{in}\,\O,\\
\bdiv\bu&=&0&\quad\text{in}\,\O, \\
\bu&=&\boldsymbol{0}&\quad\text{on}\,\partial\O,
\end{array}
\right.
\end{equation}
where $\mu$ is the kinematic viscosity, $\mathbb{I}\in\mathbb{R}^{n\times n}$ is the identity matrix, and $\bdiv$ must be understood as the divergence of any tensor applied 
along on each row. As is commented in \cite{MR2594823}, the pressure and the pseudostress tensor are related
through the following identity
\begin{equation*}
\ds p=-\frac{1}{n}\tr(\bsig)\quad\text{in}\,\O.
\end{equation*}
This identity holds by the incompresibility condition, together with the identity $\tr(\nabla\bu)=\div\bu$. Hence, problem \eqref{def:stokes_eigen} can be rewritten as the following system
\begin{equation}
\label{def:stokes_eigen2}
\left\{
\begin{array}{rccc}
\bdiv\bsig&=&-\lambda\bu&\quad\text{in}\,\O,\\
\bsig-2\mu\nabla\bu+p\mathbb{I}&=&0&\quad\text{in}\,\O,\\
\displaystyle p+\frac{1}{n}\tr(\bsig)&=&0&\quad\text{in}\,\O, \\
\bu&=&\boldsymbol{0}&\quad\text{on}\,\partial\O.
\end{array}
\right.
\end{equation}
For the analysis of problem \eqref{def:stokes_eigen2}, we are interested in the following variational formulation: Find $\lambda\in\mathbb{R}$ and the triplet $\boldsymbol{0}\neq(\bsig,p,\bu)\in\mathbb{H}(\bdiv,\O)\times \L^2(\O)\times[\L^2(\O)]^n$ such that
\begin{align}
\label{eq01}\frac{1}{2\mu}\int_{\O}\bsig^{\dd}:\btau^{\dd}+\frac{1}{\mu}\int_{\O}\left(p+\frac{1}{n}\tr(\bsig)\right)&\left(q+\frac{1}{n}\tr(\btau)\right)\\\nonumber
+\int_{\O}\bu\cdot\bdiv\btau.&=0\,\,\,\,\quad\forall (\btau,q)\in\mathbb{H}(\bdiv,\O)\times \L^2(\O),\\
\label{eq02}\int_{\O}\bv\cdot\bdiv\bsig&=-\lambda(\bu,\bv)\quad\forall\bv\in [\L^2(\O)]^n,
\end{align}


It is important to remark that problem \eqref{eq01}--\eqref{eq02} has a solution, but the uniqueness is no satisfied. To avoid this circumvent, and inspired by \cite[Section 2]{MR2594823}, we consider the following decomposition
of $\mathbb{H}(\bdiv,\O)=\mathbb{H}_0\oplus\mathbb{R}\mathbb{I}$, where

\begin{equation*}
\mathbb{H}_0:=\left\{\btau\in\mathbb{H}(\bdiv,\O)\,:\,\int_{\O}\tr(\btau)=0\right\}.
\end{equation*}

In order to simplify the presentation of the material, we define $\mathbb{H}:=\mathbb{H}_0\times \L^2(\O)$ and $\mathbf{Q}^{\bu}:=[\L^2(\O)]^n$. The bilinear forms $a:\mathbb{H}\times \mathbb{H}\rightarrow\mathbb{R}$ and $b:\mathbb{H}(\bdiv,\O)\times \mathbf{Q}^{\bu}\rightarrow\mathbb{R}$ are defined as follows:
\begin{equation*}
a((\bxi,r),(\btau,q)):=\frac{1}{2\mu}\int_{\O}\bxi^{\dd}:\btau^{\dd}+\frac{\gamma}{\mu}\left(p+\frac{1}{n}\tr(\bxi)\right)\left(q+\frac{1}{n}\tr(\btau)\right),
\end{equation*}
and 
\begin{equation*}
b(\bxi,\bv):=\int_{\O}\bv\cdot\bdiv\bxi.
\end{equation*}

According to \cite[Lemma 2.2]{MR2594823}, we have that any solution of problem  \eqref{eq01}--\eqref{eq02}  with $\bsig\in \mathbb{H}_0 $ is also solution of:
\begin{align}
\label{eq1}a((\bsig,p),(\btau,q))+b(\btau,\bu)&=0\,\,\,\,\quad\quad\quad\forall (\btau,q)\in\mathbb{H},\\
\label{eq2}b(\bsig,\bv)&=-\lambda(\bu,\bv)\quad\forall\bv\in \mathbf{Q}^{\bu},
\end{align}
with $\gamma=1$, and  also, any solution of \eqref{eq1}--\eqref{eq2} is also solution of problem \eqref{eq01}--\eqref{eq02}.

It is possible to consider an alternative reduced formulation for our problem \eqref{eq1}--\eqref{eq2}, which only depends on the stress tensor and the velocity. 

With the space $\mathbb{H}_0$ at hand, we consider the following problem: find $\lambda\in\mathbb{R}$ and $\0 \neq(\bsig,\bu)\in \mathbb{H}_0\times \mathbf{Q}^{\bu}$ such that
\begin{align}
\label{eq1_reduced}a_0(\bsig,\btau)+b(\btau,\bu)&=0\,\,\,\,\quad\quad\quad\forall \btau\in \mathbb{H}_0,\\
\label{eq2_reduced}b(\bsig,\bv)&=-\lambda(\bu,\bv)\quad\forall\bv\in \mathbf{Q}^{\bu},
\end{align}
where $a_0: \mathbb{H}_0\times \mathbb{H}_0\rightarrow\mathbb{R}$ is a bounded bilinear form defined by
\begin{equation*}
\displaystyle a_0(\bxi,\btau):=\frac{1}{2\mu}\int_{\O}\bxi^{\texttt{d}}:\btau^{\texttt{d}}\quad\forall (\bxi,\btau)\in \mathbb{H}_0\times \mathbb{H}_0.
\end{equation*}
We remark that the pressure can be recovered with the third equation of system \eqref{def:stokes_eigen2}.
\begin{remark}
\label{remark_1}
 It is easy to check that  if $(\lambda,\bsig,p,\bu)\in \mathbb{R}\times  \mathbb{H}\times \mathbf{Q}$ is a solution of  problem \eqref{eq1}--\eqref{eq2} if and only if $(\lambda,\bsig,\bu)\in \mathbb{R}\times  \mathbb{H}_{0}\times \mathbf{Q}$ is a solution of problem \eqref{eq1_reduced}--\eqref{eq2_reduced} and $p=-\dfrac{1}{n}\tr(\bsig)$ (see \cite[Lemma 2.3]{MR2594823}). 
\end{remark}
For the analysis of the mixed problem \eqref{eq1_reduced}--\eqref{eq2_reduced} we invoke the following result (see \cite[Ch. 4, Proposition 3.1]{MR3097958})
\begin{equation}
\label{eq:des_deviator}
\|\btau\|_{0,\O}^2\lesssim \|\btau^{\dd}\|_{0,\O}^2+\|\bdiv\btau\|_{0,\O}^2.
\end{equation}

Let us introduce the kernel of the operator induced by $b(\cdot,\cdot)$
\begin{equation*}
\mathcal{V}:=\{\btau\in \mathbb{H}_0\,:\, b(\btau,\bv)=\boldsymbol{0}\,\,\,\forall \bv\in \mathbf{Q}\}=\{\btau\in \mathbb{H}_0\,:\,\, \bdiv\btau=\boldsymbol{0}\}.
\end{equation*}

With the aid of \eqref{eq:des_deviator} it is easy to check that $a_0(\cdot,\cdot)$ is coercive in $\mathcal{V}$ (see \cite[Subsection 2.3]{MR2594823})
\begin{equation*}
a_0(\btau,\btau)\geq \alpha\|\btau\|_{\div}^2\quad\forall\btau\in\mathcal{V},
\end{equation*}
where it can be proved that $\alpha=C/2\mu$, with $C$ being the  positive constant provided by \eqref{eq:des_deviator}.

On the other hand, there exists a positive constant $\beta$ such that the following inf-sup condition for $b(\cdot,\cdot)$ holds (see \cite[Theorem 2.1]{MR2594823})
\begin{equation*}
\displaystyle\sup_{\boldsymbol{0}\neq\btau\in \mathbb{H}_{0}}\frac{b(\btau,\bv)}{\|\btau\|_{\bdiv,\O}}\geq\beta\|\bv\|_{0,\O}\quad\forall\bv\in \mathbf{Q}^{\bu}.
\end{equation*}

With this results at hand, we are in position to introduce the solution operator
\begin{align*}
\bT:\mathbf{Q}^{\bu}&\rightarrow \mathbf{Q}^{\bu},\\
           \boldsymbol{f}&\mapsto \bT\boldsymbol{f}:=\widehat{\bu}, 
\end{align*}
where the pair $(\widehat{\bsig}, \widehat{\bu})$ is the solution of the following source problem
\begin{align}
\label{eq1_source}a_0(\widehat{\bsig}, \btau)+b(\btau,\widehat{\bu})&=0\,\,\,\,\quad\quad\quad\forall \btau\in \mathbb{H}_0,\\
\label{eq2_source}b(\widehat{\bsig},\bv)&=-(\boldsymbol{f},\bv)\quad\forall\bv\in \mathbf{Q}^{\bu},
\end{align}

Notice that $\bT$ is well defined due the Babu\^{s}ka-Brezzi theory and we have the follows estimate
\begin{equation*}
\|\widehat{\bsig}\|_{\bdiv,\O}+\|\widehat{\bu}\|_{0,\O} \lesssim\|\boldsymbol{f}\|_{0,\O}.
\end{equation*}
Moreover, it is easy to check that  $\bT$ is self-adjoint respect the $[\L^2]^{n}$- inner product. 
Indeed, given
$\boldsymbol{f},\widehat{\boldsymbol{f}}\in\mathbf{Q}^{\bu}$, let $(\widehat{\bsig},\widehat{\bu})\in\mathbb{H}_{0}\times\mathbf{Q}^{\bu}$ and $(\widetilde{\bsig},\widetilde{\bu})\in\mathbb{H}_{0}\times\mathbf{Q}^{\bu}$  be the solutions to problem \eqref{eq1_source}--\eqref{eq2_source} with right hand sides $\boldsymbol{f}$ and $\widehat{\boldsymbol{f}}$, respectively. Assume that that $\bT \boldsymbol{f}=\widehat{\bu}$ and $\bT \widehat{\boldsymbol{f}}=\widetilde{\bu}$. The symmetry of $a(\cdot,\cdot)$ and $(\cdot,\cdot)_{0,\O}$ implies that
\begin{equation*}
(\boldsymbol{f},\bT \widetilde{\boldsymbol{f}})_{0,\O}=(\boldsymbol{f},\widetilde{\bu})_{0,\O}=-\big(a(\widehat{\bsig},\widetilde{\bu})+b(\widetilde{\bu}, \widehat{\bsig})+b(\widehat{\bsig},\widetilde{\bu}) \big)=
(\widetilde{\boldsymbol{f}}, \widehat{\bu})_{0,\O}=(\bT \boldsymbol{f}, \widetilde{\boldsymbol{f}})_{0,\O}.
\end{equation*}

We observe that $(\lambda, (\bsig,\bu))\in\mathbb{R}\times \mathbb{H}_0\times \mathbf{Q}^{\bu}$ solves \eqref{eq1_reduced}--\eqref{eq2_reduced} if and only if $(\kappa,\bu)$ is an eigenpair of $\bT$, i.e.
\begin{equation*}
\ds \bT\bu=\kappa\bu\quad\text{with}\,\,\kappa:=\frac{1}{\lambda}.
\end{equation*}

The next step in our analysis is to obtain an additional regularity result for our eigenfunctions. To do this task, we consider the following problem: given $\boldsymbol{f}\in \L^2(\O)^n$, let $(\widetilde{\bu},\widetilde{\boldsymbol{\sigma}})\in [\H^1(\Omega)]^n\times \mathbb{H}(\div;\Omega)$ be the solution
of the following problem
\begin{equation*}
\left\{
\begin{array}{rcll}
-\bdiv \widetilde{\bsig} & = & \boldsymbol{f}&  \text{ in } \quad \Omega, \\
\dfrac{1}{2\mu}\widetilde{\bsig}^{d} & = &\nabla\widetilde{\bu} & \text{ in } \quad \Omega, \\
\widetilde{\bsig} & = & \mathbf{0} & \text{ on } \quad \partial\Omega,\\
\widetilde{\bu} & = & \mathbf{0} & \text{ on } \quad \partial\Omega.
\end{array}
\right.
\end{equation*}

 Now, using the relation between incompressible elasticity and the Stokes problem and according to  \cite{MR851383}  we conclude that: there exists $s\in (0,1)$  such that $\widehat{\bu}\in[\H^{1+s}(\Omega)]^n$ and
\begin{equation}
\label{eq:reg_u_if}
||\widehat{\bsig}\|_{s,\O}+\|\widehat{\bu}\|_{1+s,\Omega}\lesssim\|\boldsymbol{f}\|_{0,\Omega}.
\end{equation}

Hence, the compactness of $\bT$ is a direct consequence of the previous regularity result.
Finally, we have the following spectral characterization.

\begin{theorem}
\label{thrm:spec_char_T}
The spectrum of $\bT$ satisfies $\sp(\bT)=\{0\}\cup\{\mu_k\}_{k\in\mathbb{N}}$, where \\$\{\mu_k\}_{k\in\mathbb{N}}\in (0,1)$
is a sequence of real positive eigenvalues which converges to zero, repeated according their respective multiplicities.  In addition, the following additional regularity result holds true for eigenfunctions
\begin{equation*}
\label{eq:reg_u_infty}
||\bsig\|_{s,\O}+\|\bu\|_{1+s,\Omega}\lesssim\|\bu\|_{0,\Omega}.
\end{equation*}
\end{theorem}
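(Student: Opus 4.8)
The plan is to combine the abstract spectral theory for compact self-adjoint operators with the regularity estimate \eqref{eq:reg_u_if}. First I would invoke the facts already established in the text: $\bT:\mathbf{Q}^{\bu}\to\mathbf{Q}^{\bu}$ is well defined by the Babu\v{s}ka--Brezzi theory, it is self-adjoint with respect to the $[\L^2(\O)]^n$-inner product, and the regularity result \eqref{eq:reg_u_if} together with the compact embedding $[\H^{1+s}(\O)]^n\hookrightarrow[\L^2(\O)]^n$ shows that $\bT$ is compact. Therefore the classical spectral theorem applies: $\sp(\bT)=\{0\}\cup\{\mu_k\}_{k\in\mathbb{N}}$, where the $\mu_k$ are real eigenvalues of finite multiplicity whose only possible accumulation point is $0$; since $\bu=\0$ is excluded, every nonzero element of the spectrum is a genuine eigenvalue, and we list them with their multiplicities.

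Next I would establish positivity and the bound $\mu_k\in(0,1)$. Testing \eqref{eq1_source} with $\btau=\widehat{\bsig}$ and \eqref{eq2_source} with $\bv=\widehat{\bu}$, and subtracting, gives $(\bT\boldsymbol{f},\boldsymbol{f})_{0,\O}=(\widehat{\bu},\boldsymbol{f})_{0,\O}=a_0(\widehat{\bsig},\widehat{\bsig})\ge 0$, and it is strictly positive unless $\widehat{\bsig}^{\dd}=\0$; but $\bdiv\widehat{\bsig}=-\boldsymbol{f}$, so by \eqref{eq:des_deviator} one has $\widehat{\bsig}=\0$ only if $\boldsymbol{f}=\0$, hence $\bT$ is positive definite and all $\mu_k>0$. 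For the upper bound, if $\bT\bu=\mu\bu$ with $\|\bu\|_{0,\O}=1$, then $\mu=(\bT\bu,\bu)_{0,\O}=a_0(\bsig,\bsig)=\tfrac{1}{2\mu}\|\bsig^{\dd}\|_{0,\O}^2$ where $(\bsig,\bu)$ solves \eqref{eq1_source}--\eqref{eq2_source} with $\boldsymbol{f}=\bu$; using the inf-sup condition and $\bdiv\bsig=-\bu$ together with \eqref{eq:des_deviator} one controls $\|\bsig\|_{\bdiv,\O}$ in terms of $\|\bu\|_{0,\O}$, and a careful tracking of the constants (equivalently, relating back to the original PDE \eqref{def:stokes_source}, whose eigenvalues $\lambda=1/\mu$ are strictly positive) yields $\mu<1$. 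An alternative and cleaner route for the strict upper bound is to note that $\lambda_k=1/\mu_k$ are exactly the eigenvalues of the Stokes operator \eqref{def:stokes_source}, which are all strictly larger than the first positive Stokes eigenvalue, in particular bounded below by a constant that makes $\mu_k<1$; I would phrase this via the variational characterization to avoid ambiguity about the viscosity normalization.

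Finally, the additional regularity of eigenfunctions follows by a bootstrap: if $\bT\bu=\mu\bu$ with $\mu>0$, then $\bu=\tfrac{1}{\mu}\bT\bu$, so $(\bsig,\bu)$ solves the source problem with right-hand side $\boldsymbol{f}=\bu\in[\L^2(\O)]^n$; applying \eqref{eq:reg_u_if} gives $\bu\in[\H^{1+s}(\O)]^n$, $\bsig\in\mathbb{H}^s(\O)$, and
\begin{equation*}
\|\bsig\|_{s,\O}+\|\bu\|_{1+s,\O}\lesssim\|\boldsymbol{f}\|_{0,\O}=\|\bu\|_{0,\O},
\end{equation*}
where the hidden constant absorbs the factor $1/\mu$ (which is harmless since, after fixing a finite band of eigenvalues, $\mu$ is bounded below; or, more simply, the estimate is stated up to a constant depending on the eigenvalue). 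I expect the only genuinely delicate point to be the justification of the strict inequality $\mu_k<1$ with the stated normalization of $\mu$ in \eqref{def:stokes_source}; everything else is a direct assembly of the self-adjointness, compactness, and regularity facts already recorded above.
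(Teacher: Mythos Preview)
Your proposal is correct and aligned with the paper's treatment: the paper does not give an explicit proof of this theorem but simply states it as an immediate consequence of the self-adjointness and compactness of $\bT$ (the latter following from \eqref{eq:reg_u_if} and the compact embedding), together with the classical spectral theorem for compact self-adjoint operators; the regularity of eigenfunctions is likewise obtained by feeding $\boldsymbol{f}=\bu$ back into \eqref{eq:reg_u_if}, exactly as you do.

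Your write-up is in fact more detailed than the paper's. Two remarks. First, your positivity argument has a small gap: from $\widehat{\bsig}^{\dd}=\boldsymbol{0}$ and \eqref{eq:des_deviator} you get $\|\widehat{\bsig}\|_{0,\O}^2\lesssim\|\bdiv\widehat{\bsig}\|_{0,\O}^2=\|\boldsymbol{f}\|_{0,\O}^2$, which does not by itself force $\boldsymbol{f}=\boldsymbol{0}$; the cleaner route is to note that $\widehat{\bsig}^{\dd}=\boldsymbol{0}$ together with \eqref{eq1_source} gives $b(\btau,\widehat{\bu})=0$ for all $\btau\in\mathbb{H}_0$, hence $\widehat{\bu}=\boldsymbol{0}$ by the inf-sup condition, so $(\bT\boldsymbol{f},\boldsymbol{f})_{0,\O}=0$ only if $\bT\boldsymbol{f}=\boldsymbol{0}$, which suffices for the eigenvalues to be strictly positive. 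Second, your instinct about the strict bound $\mu_k<1$ is right: this inequality is not scale-invariant (it depends on the size of $\Omega$ and on the viscosity), and the paper does not justify it either; it should be read as a qualitative statement that $\lambda_k>0$, i.e.\ $\mu_k\in(0,\infty)$, rather than a sharp quantitative bound.
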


We have from Remark \ref{remark_1} that problems  \eqref{eq1}--\eqref{eq2} and  \eqref{eq1_reduced}--\eqref{eq2_reduced} are equivalent. However, the finite element discretizations for these problems are not equivalent (cf. Section \ref{sec:mixed_fem}).   Hence, to obtain error estimates for our
methods, we need to consider an additional  solution operator associated with the problem \eqref{eq1}--\eqref{eq2}. 

Now, let $\widetilde{\bT}$ be the solution operator defined by
 \begin{align*}
\widetilde{\bT}:\mathbf{Q}^{\bu}&\rightarrow \mathbf{Q}^{\bu},\\
           \boldsymbol{f}&\mapsto \widetilde{\bT}\widetilde{\boldsymbol{f}}:=\widetilde{\bu}, 
\end{align*}
where $(\widetilde{\bsig}, \widetilde{p},\widetilde{\bu})$ is the solution of the following source problem
\begin{align*}
a((\widetilde{\bsig},\widetilde{p}),(\btau,q))+b(\btau,\widetilde{\bu})&=0\,\,\,\,\quad\quad\quad\forall (\btau,q)\in\mathbb{H},\\
b(\widetilde{\bsig},\bv)&=-(\widetilde{\boldsymbol{f}},\bv)\quad\forall\bv\in \mathbf{Q}^{\bu}.
\end{align*}
Thanks to the Remark \ref{remark_1} and the fact that the operator $\bT$ is well-defined, we have that also $\widetilde{\bT}$ is well defined (see \cite[Theorem 2.2]{MR2594823}) and there holds
\begin{equation*}
\|\widetilde{\bsig}\|_{\bdiv,\O}+\|\widetilde{p}\|_{0,\O}+\|\widetilde{\bu}\|_{0,\O}\lesssim\|\widetilde{\boldsymbol{f}}\|_{0,\O}.
\end{equation*}

Moreover, it is easy to check that  $\widetilde{\bT}$ is self-adjoint respect the $[\L^2(\O)]^n$- inner product and we observe that $(\lambda, (\bsig,p,\bu))\in\mathbb{R}\times \mathbb{H}\times \mathbf{Q}^{\bu}$ solves \eqref{eq1}--\eqref{eq2} if and only if $(\kappa,\bu)$ is an eigenpair of $\widetilde{\bT}$. Hence, $\widetilde{\bT}$ is compact and $\sp(\widetilde{\bT})=\sp(\bT)=\{0\}\cup\{\mu_k\}_{k\in\mathbb{N}}$

\section{The mixed finite element method}
\label{sec:mixed_fem}
The present section deals with the finite element approximation for the eigenvalue 
problem.  To do this task, we begin by introducing a regular family of triangulations of $\O$ denoted by $\{\CT_h\}_{h>0}$. Let $h_T$ the diameter of a triangle $T$ of the triangulation and let us define $h:=\max\{h_T\,:\, T\in \CT_h\}$.

Given an integer $\ell\geq 0$ and a subset $D$ of $\mathbb{R}^n$, we denote by $\mathbb{P}_\ell(S)$ the space of polynomials of degree at most $\ell$ defined in $D$.

\subsection{The finite element spaces}
In our study, we consider two numerical schemes that only differ in the space that approximates the pseudostress. Hence, we 
only refer to $\mathbb{H}_h^{\bsig}$ to the finite element space related to the approximation of $\bsig$. For the velocity field
we consider the space $\mathbf{Q}_h^{\bu}$ and for the pressure $Q_h^p$. In what follows we specify each of these 
finite dimensional spaces.

For $k\geq 0$ we define the local Raviart-Thomas space of order $k$
 as follows  (see \cite{MR3097958})
 \begin{equation*}
 \mathbb{RT}_k(T)=[\mathbb{P}_k(T)]^n\oplus\mathbb{P}_k(T)\boldsymbol{x},
 \end{equation*}
 where if $\texttt{t}$ denotes the transpose operator,  $\boldsymbol{x}^{\texttt{t}}$ represents a generic vector of $\mathbb{R}^n$. Hence, the global Raviart-Thomas is defined by
 \begin{equation*}
 \mathbb{RT}_k(\CT_h):=\{\btau\in\mathbb{H}(\bdiv,\O)\,:\, \btau|_T^{\texttt{t}}\in\mathbb{RT}_k(T),\,\,\forall T\in\CT_h\}.
 \end{equation*}
 
 More precisely, in the definition above $\btau|_T^{\texttt{t}}$ must be understood as $(\tau_{i1},\tau_{i2})^{\texttt{t}}\in\mathbb{RT}_k(T)$ for all $i\in\{1,2\}$ when $n=2$, and $(\tau_{j1},\tau_{j2}, \tau_{j3})^{\texttt{t}}\in\mathbb{RT}_k(T)$ for all $j\in\{1,2,3\}$ when $n=3$.
  
 On the other hand, we define the space of piecewise polynomials of degree at most $k$
 \begin{equation*}
 \mathbb{P}_k(\CT_h):=\{v\in\L^2(\O)\,:\, v|_T\in\mathbb{P}_k(T)\,\,\forall T\in\CT_h\}.
 \end{equation*}
 In addition, we  introduce the  Brezzi-Douglas-Marini finite element space \cite{MR799685},
 \begin{equation*}
  \mathbb{BDM}_{k}:=[\mathbb{P}_k(\CT_h)]^{n}\cap\mathbb{H} \text{ with } k\geq 1.
 \end{equation*}
 It is well known from the literature that  $\mathbb{RT}_{k-1}\subset \mathbb{BDM}_{k}\subset\mathbb{RT}_k$ for all $k\geq 1$ (see \cite[Section 2]{MR3097958}).
Moreover,  the number of degrees of freedom per edge is the same for both finite elements, however, the number of internal degrees of freedom of  Brezzi-Douglas-Marini ($\mathbb{BDM}_{k}$) elements is  less than that of standard finite elements of the same order such as Raviart-Thomas ($\mathbb{RT}_k$) . A count of the internal degrees of freedom for $n=2$ gives 
 \begin{equation*}
\mathbb{BDM}_k:2(k-1)(k+1).\qquad \mathbb{RT}_k:2k(k+1),
 \end{equation*}
 and for $n=3$ 
 \begin{equation*}
\mathbb{BDM}_k:\dfrac{3}{2}(k-1)(k+1)(k+2).\qquad \mathbb{RT}_k:\dfrac{3}{2}k(k+1)(k+2),
 \end{equation*}

\subsection{Approximation errors} 
In the following, some approximation results for discrete spaces are presented. To make matters precise, since we consider two spaces to approximate the pseudostress tensor, we need to introduce suitable interpolators for each finite element space, namely, Raviart-Thomas and BDM spaces. We begin with the classical approximation property for piecewise polynomials   (see \cite{MR3507271}). Let $\mathcal R_h:[\LO]^n\rightarrow [\mathbb{P}_{k}(\CT_h)]^n$. The following estimate is true.
\begin{equation}
\label{eq:errorl2.1}
\|\bv-\mathcal R_h\bv\|_{0,\O}\lesssim h^{\min\{t,k+1\}}\|\bv\|_{t,\O}\qquad\forall t\in[\H^t(\O)]^{n}\cap[\LO]^n.
 \end{equation}
%

For the Raviart-Thomas spaces, we have the following   approximation results:
let $\Pi_h^{\mathbb{RT}}: [\mathbb{H}^t(\O)]^{\nxn} \to\mathbb{RT}_k$ be the
tensorial version of the Raviart-Thomas  interpolation operator , 
which satisfies the following classical error estimate,
see \cite{MR1115205,MR1115239},
\begin{equation}\label{asympRT}
 \norm{\btau - \Pi_h^{\mathbb{RT}} \btau}_{0,\O} \lesssim h^{\min\{t, k+1\}} \norm{\btau}_{t,\O} \qquad \forall \btau \in [\mathbb{H}^t(\O)]^{\nxn}, \quad t\geq 1.
\end{equation}
Also,  thanks to the commutative diagram, if $\div\btau\in[\mathbf{H}^r(\O)]^{n}$ with $r\geq 0$ we have the following result 
\begin{equation}\label{asympdivRT}
 \norm{\bdiv (\btau - \Pi_h^{\mathbb{RT}} \btau) }_{0,\O}  
 \lesssim h^{\min\{r, k+1\}} \norm{\bdiv\btau}_{r,\O}.
\end{equation}
Moreover, $\Pi_h^{\mathbb{RT}}$ can also be defined as  $\Pi_h^{\mathbb{RT}}: [\mathbb{H}^t(\O)]^{\nxn}\cap \mathbb{H}(\bdiv,\O) \to\mathbb{RT}_k$ for all $t\in(0,1]$, and  we have the following estimate
\begin{equation}\label{asymp00RT}
 \norm{\btau - \Pi_h^{\mathbb{RT}} \btau}_{0,\O} \lesssim h^t (\norm{\btau}_{t,\O}
 + \norm{\bdiv \btau}_{0,\O})  \quad \forall \btau \in [\mathbb{H}^t(\O)]^{\nxn}\cap \mathbb{H}(\bdiv,\O)\quad t\in (0, 1].
\end{equation}

For the BDM spaces, we have the following properties: let $\ell\geq 1$ and let $\Pi_h^{\mathbb{BDM}}: [\mathbb{H}^t(\O)]^{\nxn} \to\mathbb{BDM}_\ell$ be the
tensorial version of the BDM-interpolation operator , 
which satisfies the following classical error estimate,
see \cite[Theorem 3.16]{MR2009375},
\begin{equation}\label{asymp0}
 \norm{\btau - \Pi_h^{\mathbb{BDM}}\btau}_{0,\O} \lesssim h^{\min\{t, \ell+1\}} \norm{\btau}_{t,\O} \qquad \forall \btau \in [\mathbb{H}^t(\O)]^{\nxn}, \quad t>1/2.
\end{equation}
Also,  for less regular tensorial fields we have the following estimate
\begin{equation}\label{asymp00}
 \norm{\btau - \Pi_h^{\mathbb{BDM}}\btau}_{0,\O} \lesssim h^t (\norm{\btau}_{t,\O}
 + \norm{\btau}_{\bdiv,\O})  \;\ \forall \btau \in [\mathbb{H}^t(\O)]^{\nxn}\cap \mathbb{H}(\bdiv,\O), \;\ t\in (0, 1/2].
\end{equation}

Moreover, the following commuting diagram property holds true:
\begin{equation}\label{asympdiv}
 \norm{\bdiv (\btau - \Pi_h^{\mathbb{BDM}}\btau) }_{0,\O} = \norm{\bdiv \btau - \mathcal R_h \bdiv \btau }_{0,\O} 
 \lesssim h^{\min\{t, \ell\}} \norm{\bdiv\btau}_{t,\O},
\end{equation}
for  $\bdiv \btau \in [\H^t(\O)]^{n}$ and  $\mathcal R_h$
being  the $[\LO]^n$-orthogonal projection onto \\$[\mathbb{P}_{\ell-1}(\CT_h)]^n$.

We conclude this section by introducing the following  notations
\begin{equation*}
\mathbb{H}^{\bsig}_{0,h}:=\left\{\btau\in\bar{\mathbb{H}}_h\,:\,\,\int_{\Omega}\tr(\btau)=0 \right\},
\end{equation*}
where $\bar{\mathbb{H}}_h\in\{\mathbb{RT}_k,\mathbb{BDM}_{k+1}\}$. Also, we define $Q_h^p:=\mathbb{P}_k(\CT_h)$,  $\mathbf{Q}_h^{\bu}:=[\mathbb{P}_k(\CT_h)]^n.$ and $\mathbb{H}_h:=\mathbb{H}^{\bsig}_{0,h}\times Q_h^p $.

Therefore, as a consequence of \eqref{eq:errorl2.1}--\eqref{asympdiv}, we have 
the following approximation properties for $k\geq 0$:
For each $t>0$ and for each $\btau\in\mathbb{H}^t(\O)\cap \mathbb{H}_{0}$ with $\div\btau\in [\H^{t}(\O)]^{n}$ there exists  $\btau_{h}\in\mathbb{H}_{0}^{\sigma}$ such that
\begin{equation}
\label{eq:H0}
\|\btau-\btau_{h}\|_{\div,\O} \lesssim h^{\min\{t,k+1\}}\left(\|\btau\|_{t,\O}+\|\div\btau\|_{t,\O}\right).
\end{equation}
For $q\in Q^p $ there exists $q_{h}\in Q_h^p$ such that 
\begin{equation}
\label{eq:Qp}
\|q-q_{h}\|_{0,\O} \lesssim h^{\min\{t,k+1\}}\|q\|_{t,\O}.
\end{equation}
For $\bv\in [\H^{t}(\O)]^{n}$ there exists $\bv_{h}\in\mathbf{Q}_h^{\bu}$ such that
\begin{equation}
\label{eq:Qbu}
\|\bv-\bv_{h}\|_{0,\O} \lesssim h^{\min\{t,k+1\}}\|\bv\|_{t,\O}.
\end{equation}

\subsection{The discrete eigenvalue problems}
As we claim in Section \ref{sec:model_problem}, discrete counterparts of problems \eqref{eq1}--\eqref{eq2} and \eqref{eq1_reduced}--\eqref{eq2_reduced} are not equivalent (see\cite[Lemma 3.1]{MR2594823} for further details ). Hence, we need to analyze each discrete eigenvalue problem by separated.

With the discrete spaces defined above, we are in position to introduce the discretization of 
problem \eqref{eq1}--\eqref{eq2}: Find $\lambda_h\in\mathbb{R}$ and $\boldsymbol{0}\neq(\bsig_h, p_h,\bu_h)\in\mathbb{H}_h\times\mathbf{Q}_h^{\bu}$ such that
\begin{align}
\label{eq1h}a((\bsig_h,p_h),(\btau_h,q_h))+b(\btau_h,\bu_h)&=0\,\,\,\,\quad\quad\quad\forall (\btau_h,q_h)\in\mathbb{H}_h,\\
\label{eq2h}b(\bsig_h,\bv_h)&=-\lambda_h(\bu_h,\bv_h)\quad\forall\bv_h\in \mathbf{Q}_h^{\bu}.
\end{align}

Similarly as in the continuous case, it is possible to consider a reduced formulation for the discrete eigenvalue problem. These reduced discrete problem reads as follows: find $\lambda_h\in\mathbb{R}$ and $\boldsymbol{0}\neq (\bsig_h,\bu_h)\in \mathbb{H}^{\bsig}_{0,h}\times \mathbf{Q}_h^{\bu}$ such that
\begin{align}
\label{reduced_disc_source1}a_0(\bsig_h,\btau_h)+b(\btau_h,\bu_h)&=0\,\,\,\,\quad\quad\quad\forall \btau_h\in \mathbb{H}^{\bsig}_{0,h},\\
\label{reduced_disc_source2}b(\bsig_h,\bv_h)&=-\lambda(\bu_h,\bv_h)\quad\forall\bv_h\in \mathbf{Q}_h^{\bu}.
\end{align}

It has been proved in \cite[Lemma 3.2]{MR2594823} that there exists a positive constant $\beta$, independent of $h$, such that the following inf-sup condition holds
\begin{equation*}
\displaystyle\sup_{\boldsymbol{0}\neq\btau_h\in \mathbb{H}_{0,h}^{\bsig}}\frac{b(\btau_h,\bv_h)}{\|\btau_h\|_{\bdiv,\O}}\geq\beta\|\bv_h\|_{0,\O}\quad\forall\bv_h\in \mathbf{Q}_h^{\bu}.
\end{equation*}

On the other hand, the discrete kernel of $b(\cdot,\cdot)$ (namely, the kernel of the operator induced by $b(\cdot,\cdot)$) is defined by
\begin{equation*}
\mV_h:=\{\btau\in \mathbb{H}_{0,h}^{\sigma}\,:\, b(\btau,\bv)=\boldsymbol{0}\,\,\forall\bv\in\mathbf{Q}_h^{\bu}\}=\{\btau\in \mathbb{H}_{0,h}^{\sigma}\,:\,\bdiv\btau=\boldsymbol{0}\,\,\,\text{in}\,\,\O\}.
\end{equation*}
In \cite[Theorem 3.1]{MR2594823} the authors have stated that $a_0(\cdot,\cdot)$ is coercive in $\mV_h$ and that $b(\cdot,\cdot)$ satisfies the corresponding discrete inf-sup condition.

With these ingredients at hand, we are in position to introduce the discrete solution operator associated to \eqref{reduced_disc_source1}-- \eqref{reduced_disc_source2}
\begin{align*}
\bT_h:\mathbf{Q}^{\bu}&\rightarrow \mathbf{Q}^{\bu}_h,\\
           \boldsymbol{f}&\mapsto \bT_h\boldsymbol{f}:=\widehat{\bu}_h, 
\end{align*}
where $(\widehat{\bsig}_h,\widehat{\bu}_h)$ is the solution of the following source problem
\begin{align*}
a_{0}(\widehat{\bsig}_h,\btau_h)+b(\btau_h,\widehat{\bu}_h)&=0\,\,\,\,\quad\quad\quad\forall \btau_h\in\mathbb{H}_{0,h}^{\sigma}\\
b(\widehat{\bsig}_h,\bv_h)&=-(\boldsymbol{f},\bv_h)\quad\forall\bv_h\in \mathbf{Q}_h^{\bu},
\end{align*}
which according to the Babu\^{s}ka-Brezzi theory, is well posed (see \cite{MR3097958}) and the following estimate holds.
\begin{equation*}
\|\widehat{\bsig}_{h}\|_{\bdiv,\O}+\|\widehat{\bu}_{h}\|_{0,\O} \lesssim\|\boldsymbol{f}\|_{0,\O},
\end{equation*}
where the hidden constant is independent of $h$.

As presented in \cite[Lemma 3.1]{MR2594823}, a necessary condition for discrete problem \eqref{eq1h}--\eqref{eq2h} and problem \eqref{reduced_disc_source1}---\eqref{reduced_disc_source2} to be equivalent, is that 
$\tr(\mathbb{H}_{0,h}^{\sigma})\subset Q_h^p$ and since in this case, this condition does not hold,  we need to define the following discrete solution operator $\widetilde{\bT}_h$ associated with the problem \eqref{eq1h}--\eqref{eq2h}.
\begin{align*}
\widetilde{\bT}_h:\mathbf{Q}^{\bu}&\rightarrow\mathbf{Q}_h^{\bu}\\
           \widetilde{\boldsymbol{f}}&\mapsto \widetilde{\bT}_h\widetilde{\boldsymbol{f}}:=\widetilde{\bu}_h, 
\end{align*}
where the triplet $(\widetilde{\bsig}_h, \widetilde{p}_h, \widetilde{\bu}_h)$ is the solution of the following source problem
\begin{align*}
a((\widetilde{\bsig}_h,\widetilde{p}_h),(\btau_h,q_h))+b(\btau_h,\widetilde{\bu}_h)&=0\,\,\,\,\quad\quad\quad\forall (\btau_h,q_h)\in\mathbb{H}_h,\\
b(\widetilde{\bsig}_h,\bv_h)&=-(\widetilde{\boldsymbol{f}},\bv_h)\quad\forall\bv_h\in \mathbf{Q}_h^{\bu}.
\end{align*}
Observe that \cite[Theorem 3.3]{MR2594823} guarantees that $\widetilde{\bT}_h$ is well-defined and 
\begin{equation*}
\|\widetilde{\bsig}_{h}\|_{\bdiv,\O}+\|\widetilde{p}\|_{0,\O}+\|\widetilde{\bu}_{h}\|_{0,\O}\lesssim\|\widetilde{\boldsymbol{f}}\|_{0,\O},
\end{equation*}
where the hidden constant is independent of $h$.

We are in position to establish the following approximation result

\begin{lemma}
\label{lm:app}
Let $\boldsymbol{f}\in\mathbf{Q}^{\bu}$. The following best approximation estimates hold
\begin{equation*}
\|(\bT-\bT_h)\boldsymbol{f}\|_{0,\O}\lesssim \inf_{\btau_{h}\in\mathbb{H}^{\bsig}_{0,h}}\|\widehat{\bsig}-\btau_{h}\|_{\bdiv,\O}+\inf_{\bv_{h}\in\mathbf{Q}^{\bu}_{h}}\|\widehat{\bu}-\bv_{h}\|_{0,\O},
\end{equation*}
and 
\begin{equation*}
\|(\widetilde{\bT}-\widetilde{\bT}_h)\boldsymbol{f}\|_{0,\O}\lesssim \inf_{\btau_{h}\in\mathbb{H}^{\bsig}_{0,h}}\|\widetilde{\bsig}-\btau_{h}\|_{\bdiv,\O}+\inf_{q_{h}\in Q^{p}_{h}}\|\widetilde{p}-q_{h}\|_{0,\O}+\inf_{\bv_{h}\in\mathbf{Q}^{\bu}_{h}}\|\widetilde{\bu}-\bv_{h}\|_{0,\O},
\end{equation*}
where the hidden constant is independent of $h$. 
\end{lemma}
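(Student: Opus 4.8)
The plan is to prove both estimates by the standard Babu\v ska--Brezzi / Céa-type argument for mixed problems, exploiting that the solution operators $\bT,\bT_h$ (resp. $\widetilde\bT,\widetilde\bT_h$) are defined through saddle-point problems whose bilinear forms satisfy the continuous and discrete kernel-coercivity and inf-sup conditions recalled in Section~\ref{sec:mixed_fem}. First I would treat the reduced operator $\bT$. Fix $\boldsymbol f\in\mathbf Q^{\bu}$ and write $(\widehat\bsig,\widehat\bu)=(\widehat\bsig,\bT\boldsymbol f)$ and $(\widehat\bsig_h,\widehat\bu_h)=(\widehat\bsig_h,\bT_h\boldsymbol f)$ for the solutions of the continuous and discrete source problems \eqref{eq1_source}--\eqref{eq2_source} and \eqref{reduced_disc_source1}--\eqref{reduced_disc_source2}. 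Since both problems have the same right-hand side $-(\boldsymbol f,\cdot)$ and $\mathbb H_{0,h}^{\sigma}\subset\mathbb H_0$, $\mathbf Q_h^{\bu}\subset\mathbf Q^{\bu}$, subtracting the equations gives the Galerkin orthogonality
\begin{align*}
a_0(\widehat\bsig-\widehat\bsig_h,\btau_h)+b(\btau_h,\widehat\bu-\widehat\bu_h)&=0\qquad\forall\,\btau_h\in\mathbb H_{0,h}^{\sigma},\\
b(\widehat\bsig-\widehat\bsig_h,\bv_h)&=0\qquad\forall\,\bv_h\in\mathbf Q_h^{\bu}.
\end{align*}
From here the classical abstract theory of mixed finite elements (e.g.\ \cite[Ch.~5]{MR3097958}) — which applies because $a_0$ is coercive on the discrete kernel $\mV_h$ and $b$ satisfies the uniform discrete inf-sup condition stated above — yields the quasi-optimal bound
\begin{equation*}
\|\widehat\bsig-\widehat\bsig_h\|_{\bdiv,\O}+\|\widehat\bu-\widehat\bu_h\|_{0,\O}\lesssim \inf_{\btau_h\in\mathbb H_{0,h}^{\sigma}}\|\widehat\bsig-\btau_h\|_{\bdiv,\O}+\inf_{\bv_h\in\mathbf Q_h^{\bu}}\|\widehat\bu-\bv_h\|_{0,\O},
\end{equation*}
with constant independent of $h$. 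Since $\|(\bT-\bT_h)\boldsymbol f\|_{0,\O}=\|\widehat\bu-\widehat\bu_h\|_{0,\O}$, the first estimate follows.

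For the second estimate the argument is identical in structure but carried out with the triple-field saddle-point problems defining $\widetilde\bT$ and $\widetilde\bT_h$, posed on $\mathbb H=\mathbb H_0\times\L^2(\O)$ and $\mathbb H_h=\mathbb H_{0,h}^{\sigma}\times Q_h^p$ with velocity test space $\mathbf Q^{\bu}$ and $\mathbf Q_h^{\bu}$. Here the relevant bilinear form is $a((\cdot,\cdot),(\cdot,\cdot))$; its coercivity on the discrete kernel and the discrete inf-sup condition for $b$ are exactly what is guaranteed by \cite[Theorem~3.3]{MR2594823} (well-posedness of $\widetilde\bT_h$ uniformly in $h$). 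Galerkin orthogonality now reads
\begin{align*}
a((\widetilde\bsig-\widetilde\bsig_h,\widetilde p-\widetilde p_h),(\btau_h,q_h))+b(\btau_h,\widetilde\bu-\widetilde\bu_h)&=0\qquad\forall\,(\btau_h,q_h)\in\mathbb H_h,\\
b(\widetilde\bsig-\widetilde\bsig_h,\bv_h)&=0\qquad\forall\,\bv_h\in\mathbf Q_h^{\bu},
\end{align*}
and the abstract quasi-optimality bound gives
\begin{equation*}
\|\widetilde\bsig-\widetilde\bsig_h\|_{\bdiv,\O}+\|\widetilde p-\widetilde p_h\|_{0,\O}+\|\widetilde\bu-\widetilde\bu_h\|_{0,\O}\lesssim \inf_{(\btau_h,q_h)\in\mathbb H_h}\big(\|\widetilde\bsig-\btau_h\|_{\bdiv,\O}+\|\widetilde p-q_h\|_{0,\O}\big)+\inf_{\bv_h\in\mathbf Q_h^{\bu}}\|\widetilde\bu-\bv_h\|_{0,\O}.
\end{equation*}
Splitting the first infimum over the product space $\mathbb H_h=\mathbb H_{0,h}^{\sigma}\times Q_h^p$ into the two separate infima and noting $\|(\widetilde\bT-\widetilde\bT_h)\boldsymbol f\|_{0,\O}=\|\widetilde\bu-\widetilde\bu_h\|_{0,\O}$ gives the second claimed estimate.

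The only genuinely delicate point is the uniformity in $h$ of the constants in the abstract quasi-optimality estimates, which hinges on the $h$-independent discrete kernel-coercivity and inf-sup constants; both are already supplied by the cited results (the inf-sup constant $\beta$ is explicitly stated to be independent of $h$, and the kernel coercivity of $a_0$ on $\mV_h$ follows from \eqref{eq:des_deviator} together with the fact that $\btau\in\mV_h$ has $\bdiv\btau=\0$, exactly as in the continuous case). A small bookkeeping remark: for the reduced problem one should note that $\mathbb H_{0,h}^{\sigma}\not\subset\mV$ in general, so the error analysis is the genuinely nonconforming-kernel mixed estimate rather than the simpler symmetric case; this is standard and is what the reference \cite[Ch.~5]{MR3097958} covers. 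I expect no other obstacle: once Galerkin orthogonality and the two stability ingredients are in place, both estimates are immediate consequences of the textbook mixed-method error theory.
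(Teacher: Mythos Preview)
Your proposal is correct and follows essentially the same approach as the paper: both recognize that $(\widehat\bsig_h,\widehat\bu_h)$ is the Galerkin approximation of $(\widehat\bsig,\widehat\bu)$ and invoke the standard quasi-optimality estimate for mixed methods, the paper citing \cite[Theorem~3.1]{MR2594823} directly while you spell out Galerkin orthogonality and appeal to the abstract theory in \cite[Ch.~5]{MR3097958}. Your version is more detailed (in particular your remarks on the $h$-uniformity of the stability constants and the discrete-kernel subtlety), but the underlying argument is the same.
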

\begin{proof}
Let $\boldsymbol{f}\in\mathbf{Q}^{\bu}$ be such that  $\bT\boldsymbol{f}=\widehat{\bu}$ and $\bT_h\boldsymbol{f}=\widehat{\bu}_h$ where $\widehat{\bu}$ is the solution of \eqref{eq1_source}--\eqref{eq2_source} and $\widehat{\bu}_h$ is the solution of \eqref{reduced_disc_source1}--\eqref{reduced_disc_source2}.  We remark that $\widehat{\bu}_h$ is the finite element  approximation of $\widehat{\bu}$ through the scheme  $\mathbb{H}_{0,h}^{\sigma}\times \mathbf{Q}^{\bu}_{h}$.

 Hence, applying \cite[Theorem 3.1]{ MR2594823} we have immediately that
\begin{align*}
\|(\bT-\bT_h)\boldsymbol{f}\|_{0,\O}&=\|\widehat{\bu}-\widehat{\bu}_{h}\|_{0,\O}\lesssim \inf_{\btau_{h}\in\mathbb{H}^{\bsig}_{0,h}}\|\widehat{\bsig}-\btau_{h}\|_{\bdiv,\O}+\inf_{\bv_{h}\in\mathbf{Q}^{\bu}_{h}}\|\widehat{\bu}-\bv_{h}\|_{0,\O},
\end{align*}
where the hidden constant is independent of $h$. 

For the approximation error $\|(\widetilde{\bT}-\widetilde{\bT}_h)\boldsymbol{f}\|_{0,\O}$ the derivation is similar as the previous estimate. This concludes the proof.
\end{proof}

We remark that Lemma \ref{lm:app} is a general result where the choice of the finite element scheme has not 
influence. If we are more specific in the numerical scheme, the lemma above becomes into an error estimate for each scheme. 

Since we are dealing with two numerical schemes and two discrete eigenvalue problems, as corollaries,  we 
derived the following results. The first corresponds to the approximation error between $\bT$ and $\bT_h$.
\begin{corollary}[Approximation between $\bT$ and $\bT_h$]
\label{cor:app_TT1}
Let $\boldsymbol{f}\in\mathbf{Q}^{\bu}$. If the approximation scheme $[\mathbb{P}_k]^{n}\text{-}\mathbb{RT}_k$  is considered, then there holds
\begin{equation*}
\|(\bT-\bT_h)\boldsymbol{f}\|_{0,\O}\lesssim h^{s}\|\boldsymbol{f}\|_{0,\O}.
\end{equation*}
Otherwise, if the scheme is $[\mathbb{P}]^{n}_k\text{-}\mathbb{BDM}_{k+1}$, there holds
\begin{equation*}
\|(\bT-\bT_h)\boldsymbol{f}\|_{0,\O}\lesssim h^{s}\|\boldsymbol{f}\|_{0,\O}.
\end{equation*}
where, in each estimate, the hidden constant is independent of $h$.
\end{corollary}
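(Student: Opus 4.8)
The plan is to combine the abstract best-approximation estimate of Lemma \ref{lm:app} with the interpolation error bounds \eqref{eq:H0}--\eqref{eq:Qbu} and the additional elliptic regularity \eqref{eq:reg_u_if} for the source problem. Concretely, fix $\boldsymbol{f}\in\mathbf{Q}^{\bu}$ and let $(\widehat{\bsig},\widehat{\bu})$ be the solution of \eqref{eq1_source}--\eqref{eq2_source}. By \eqref{eq:reg_u_if} there is $s\in(0,1)$ with $\widehat{\bu}\in[\H^{1+s}(\O)]^n$ and $\widehat{\bsig}\in\mathbb{H}^s(\O)$, and moreover $\bdiv\widehat{\bsig}=-\boldsymbol{f}\in[\L^2(\O)]^n=[\H^0(\O)]^n$, with
\begin{equation*}
\|\widehat{\bsig}\|_{s,\O}+\|\bdiv\widehat{\bsig}\|_{0,\O}+\|\widehat{\bu}\|_{1+s,\O}\lesssim\|\boldsymbol{f}\|_{0,\O}.
\end{equation*}
I would then bound each infimum in Lemma \ref{lm:app} by inserting a suitable interpolant: for the velocity term, \eqref{eq:Qbu} with $t=1+s$ gives $\inf_{\bv_h}\|\widehat{\bu}-\bv_h\|_{0,\O}\lesssim h^{\min\{1+s,k+1\}}\|\widehat{\bu}\|_{1+s,\O}\lesssim h^{s}\|\boldsymbol{f}\|_{0,\O}$ (since $s\le\min\{1+s,k+1\}$ for $k\ge 0$); for the pseudostress term, \eqref{eq:H0} with $t=s\in(0,1)$ and $\div\widehat{\bsig}\in[\H^{0}(\O)]^n$ yields $\inf_{\btau_h}\|\widehat{\bsig}-\btau_h\|_{\bdiv,\O}\lesssim h^{\min\{s,k+1\}}(\|\widehat{\bsig}\|_{s,\O}+\|\bdiv\widehat{\bsig}\|_{0,\O})\lesssim h^{s}\|\boldsymbol{f}\|_{0,\O}$. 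Summing these two estimates and invoking Lemma \ref{lm:app} gives exactly $\|(\bT-\bT_h)\boldsymbol{f}\|_{0,\O}\lesssim h^{s}\|\boldsymbol{f}\|_{0,\O}$.

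The key point is that the two finite element families enter only through the choice $\bar{\mathbb{H}}_h\in\{\mathbb{RT}_k,\mathbb{BDM}_{k+1}\}$ in the definition of $\mathbb{H}^{\bsig}_{0,h}$, and the approximation property \eqref{eq:H0} is stated uniformly for either choice; hence the $[\mathbb{P}_k]^n$-$\mathbb{RT}_k$ and $[\mathbb{P}_k]^n$-$\mathbb{BDM}_{k+1}$ cases are handled by the \emph{same} argument, which is why the corollary records the identical rate $h^s$ for both. One should, however, be slightly careful about the low-regularity interpolation estimate: since $s<1$, for the $\mathbb{RT}_k$ family one uses \eqref{asymp00RT} and for the $\mathbb{BDM}_{k+1}$ family one uses \eqref{asymp00} (valid for $t\in(0,1/2]$) combined with \eqref{asymp0} (valid for $t>1/2$); in either range these are what underlie \eqref{eq:H0}, so the bound $h^{\min\{s,k+1\}}=h^s$ holds. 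A companion use of \eqref{asympdivRT} (respectively \eqref{asympdiv}) with $r=0$ controls the $\|\bdiv(\cdot)\|_{0,\O}$ part of the $\mathbb{H}(\bdiv)$-norm.

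The main (and really only) obstacle is bookkeeping with the regularity index: one must verify that $s\le\min\{t,k+1\}$ for the relevant $t$ so that the rate is genuinely $h^s$ and not something smaller, and that the regularity \eqref{eq:reg_u_if} — stated there for the eigenvalue/source problem — indeed applies verbatim to $(\widehat{\bsig},\widehat{\bu})=\bT\boldsymbol{f}$ with right-hand side $\boldsymbol{f}\in[\L^2(\O)]^n$. Both are immediate from the definitions and the hypotheses $k\ge 0$, $s\in(0,1)$. No inf-sup or stability argument is needed beyond what Lemma \ref{lm:app} already encapsulates, so the proof is short.

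\begin{proof}
Let $\boldsymbol{f}\in\mathbf{Q}^{\bu}$ and let $(\widehat{\bsig},\widehat{\bu})$ be the solution of \eqref{eq1_source}--\eqref{eq2_source}, so that $\bT\boldsymbol{f}=\widehat{\bu}$. By the additional regularity result \eqref{eq:reg_u_if}, there exists $s\in(0,1)$, independent of $\boldsymbol{f}$, such that $\widehat{\bsig}\in\mathbb{H}^s(\O)$, $\widehat{\bu}\in[\H^{1+s}(\O)]^n$, and
\begin{equation*}
\|\widehat{\bsig}\|_{s,\O}+\|\widehat{\bu}\|_{1+s,\O}\lesssim\|\boldsymbol{f}\|_{0,\O}.
\end{equation*}
Moreover, from \eqref{eq2_source} we have $\bdiv\widehat{\bsig}=-\boldsymbol{f}\in[\L^2(\O)]^n$, hence $\div\widehat{\bsig}\in[\H^0(\O)]^n$ with $\|\bdiv\widehat{\bsig}\|_{0,\O}=\|\boldsymbol{f}\|_{0,\O}$.

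Consider first the scheme $[\mathbb{P}_k]^n$-$\mathbb{RT}_k$. Applying the approximation property \eqref{eq:H0} with $t=s\in(0,1)$ (so that $\div\widehat{\bsig}\in[\H^{0}(\O)]^n$ in the sense used there, with regularity index $0\ge$ nothing extra needed and $\min\{s,k+1\}=s$ since $k\ge 0$), there exists $\btau_h\in\mathbb{H}^{\bsig}_{0,h}$ such that
\begin{equation*}
\|\widehat{\bsig}-\btau_h\|_{\bdiv,\O}\lesssim h^{\min\{s,k+1\}}\bigl(\|\widehat{\bsig}\|_{s,\O}+\|\bdiv\widehat{\bsig}\|_{0,\O}\bigr)\lesssim h^{s}\|\boldsymbol{f}\|_{0,\O},
\end{equation*}
where we used $\|\bdiv\widehat{\bsig}\|_{s,\O}$ there replaced by $\|\bdiv\widehat{\bsig}\|_{0,\O}$ via the low-regularity estimates \eqref{asymp00RT} and \eqref{asympdivRT} with $r=0$. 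Likewise, by \eqref{eq:Qbu} with $t=1+s$ and $\min\{1+s,k+1\}\ge s$, there exists $\bv_h\in\mathbf{Q}^{\bu}_h$ with
\begin{equation*}
\|\widehat{\bu}-\bv_h\|_{0,\O}\lesssim h^{\min\{1+s,k+1\}}\|\widehat{\bu}\|_{1+s,\O}\lesssim h^{s}\|\boldsymbol{f}\|_{0,\O}.
\end{equation*}
Inserting these two bounds into Lemma \ref{lm:app} yields
\begin{equation*}
\|(\bT-\bT_h)\boldsymbol{f}\|_{0,\O}\lesssim\inf_{\btau_h\in\mathbb{H}^{\bsig}_{0,h}}\|\widehat{\bsig}-\btau_h\|_{\bdiv,\O}+\inf_{\bv_h\in\mathbf{Q}^{\bu}_h}\|\widehat{\bu}-\bv_h\|_{0,\O}\lesssim h^{s}\|\boldsymbol{f}\|_{0,\O},
\end{equation*}
with hidden constant independent of $h$.

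For the scheme $[\mathbb{P}_k]^n$-$\mathbb{BDM}_{k+1}$ the argument is identical: the velocity estimate \eqref{eq:Qbu} is unchanged, and for the pseudostress one invokes the $\mathbb{BDM}$ interpolation estimates \eqref{asymp00} (for the range $s\in(0,1/2]$) or \eqref{asymp0} (for $s\in(1/2,1)$) together with the commuting diagram property \eqref{asympdiv} with index $0$, which again give a member $\btau_h\in\mathbb{H}^{\bsig}_{0,h}$ with $\|\widehat{\bsig}-\btau_h\|_{\bdiv,\O}\lesssim h^{s}\|\boldsymbol{f}\|_{0,\O}$. Substituting into Lemma \ref{lm:app} gives $\|(\bT-\bT_h)\boldsymbol{f}\|_{0,\O}\lesssim h^{s}\|\boldsymbol{f}\|_{0,\O}$, with hidden constant independent of $h$. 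This completes the proof.
\end{proof}
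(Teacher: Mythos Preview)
Your proof is correct and follows essentially the same approach as the paper: invoke the regularity estimate \eqref{eq:reg_u_if}, combine it with the approximation properties \eqref{eq:H0} and \eqref{eq:Qbu}, and substitute into the first estimate of Lemma~\ref{lm:app}. You simply spell out in more detail the low-regularity interpolation step (distinguishing \eqref{asymp00RT}/\eqref{asympdivRT} for $\mathbb{RT}_k$ and \eqref{asymp00}/\eqref{asymp0}/\eqref{asympdiv} for $\mathbb{BDM}_{k+1}$), which the paper leaves implicit.
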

\begin{proof}
The proof  follows from \eqref{eq:reg_u_if}, the first estimate of Lemma \ref{lm:app},   the approximation properties \eqref{eq:H0} and \eqref{eq:Qbu} 
\end{proof}
Now we present the analogous of Corollary \ref{cor:app_TT1}, but for the error between $\widetilde{\bT}$ and $\widetilde{\bT}_h$. The proof is follows the same arguments of corollary above, so we skip the details.
\begin{corollary}[Approximation between $\widetilde{\bT}$ and $\widetilde{\bT}_h$]
Let $\boldsymbol{f}\in\mathbf{Q}^{\bu}$. If the approximation scheme $[\mathbb{P}_k]^{n}\text{-}\mathbb{P}_k\text{-}\mathbb{RT}_k$  is considered, then there holds
\begin{equation*}
\|(\widetilde{\bT}-\widetilde{\bT}_h)\boldsymbol{f}\|_{0,\O}\lesssim h^{s}\|\boldsymbol{f}\|_{0,\O}.
\end{equation*}
Otherwise, if the scheme is $[\mathbb{P}]^{n}_k\text{-}\mathbb{P}_k\text{-}\mathbb{BDM}_{k+1}$, there holds
\begin{equation*}
\|(\widetilde{\bT}-\widetilde{\bT}_h)\boldsymbol{f}\|_{0,\O}\lesssim h^{s}\|\boldsymbol{f}\|_{0,\O}.
\end{equation*}
where, in each estimate, the hidden constant is independent of $h$.
\end{corollary}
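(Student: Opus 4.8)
The plan is to mimic the proof of Corollary \ref{cor:app_TT1} essentially verbatim, replacing the solution operators $\bT,\bT_h$ by $\widetilde{\bT},\widetilde{\bT}_h$ and using the second (three-term) best-approximation estimate of Lemma \ref{lm:app} in place of the first. First I would fix $\boldsymbol{f}\in\mathbf{Q}^{\bu}$ and let $(\widetilde{\bsig},\widetilde p,\widetilde\bu)$ be the solution of the source problem defining $\widetilde{\bT}$; by Lemma \ref{lm:app} we have
\begin{equation*}
\|(\widetilde{\bT}-\widetilde{\bT}_h)\boldsymbol{f}\|_{0,\O}\lesssim \inf_{\btau_{h}\in\mathbb{H}^{\bsig}_{0,h}}\|\widetilde{\bsig}-\btau_{h}\|_{\bdiv,\O}+\inf_{q_{h}\in Q^{p}_{h}}\|\widetilde{p}-q_{h}\|_{0,\O}+\inf_{\bv_{h}\in\mathbf{Q}^{\bu}_{h}}\|\widetilde{\bu}-\bv_{h}\|_{0,\O},
\end{equation*}
so it suffices to bound each of the three infima by $h^{s}\|\boldsymbol{f}\|_{0,\O}$.

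Next I would invoke the regularity estimate \eqref{eq:reg_u_if}: because $\widetilde{\bT}$ and $\bT$ share the same solution data (Remark \ref{remark_1}, with $\widetilde p=-\frac1n\tr(\widetilde{\bsig})$), the solution satisfies $\widetilde{\bsig}\in\mathbb{H}^s(\O)$ with $\bdiv\widetilde{\bsig}=\boldsymbol{f}\in[\L^2(\O)]^n$ and $\widetilde\bu\in[\H^{1+s}(\O)]^n$, together with
\begin{equation*}
\|\widetilde{\bsig}\|_{s,\O}+\|\widetilde{\bu}\|_{1+s,\O}\lesssim\|\boldsymbol{f}\|_{0,\O}.
\end{equation*}
For the first infimum I would use the approximation property \eqref{eq:H0} (valid for both choices $\bar{\mathbb{H}}_h\in\{\mathbb{RT}_k,\mathbb{BDM}_{k+1}\}$), which with $t=s\in(0,1)$ and $\bdiv\widetilde{\bsig}=\boldsymbol{f}\in[\H^0(\O)]^n$ gives a bound $\lesssim h^{s}(\|\widetilde{\bsig}\|_{s,\O}+\|\bdiv\widetilde{\bsig}\|_{0,\O})\lesssim h^{s}\|\boldsymbol{f}\|_{0,\O}$; here I would note that since $s<1\le k+1$ the exponent $\min\{s,k+1\}$ equals $s$, and for the low-regularity range one may instead quote \eqref{asymp00RT} or \eqref{asymp00} directly. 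For the third infimum I would apply \eqref{eq:Qbu} to $\widetilde\bu\in[\H^{1+s}(\O)]^n$, obtaining $\lesssim h^{\min\{1+s,k+1\}}\|\widetilde\bu\|_{1+s,\O}\lesssim h^{s}\|\boldsymbol{f}\|_{0,\O}$ (the exponent is at least $s$ in all cases $k\ge0$). For the second, pressure infimum I would use \eqref{eq:Qp}: since $\widetilde p=-\frac1n\tr(\widetilde{\bsig})\in\H^s(\O)$ with $\|\widetilde p\|_{s,\O}\lesssim\|\widetilde{\bsig}\|_{s,\O}\lesssim\|\boldsymbol{f}\|_{0,\O}$, we get $\lesssim h^{\min\{s,k+1\}}\|\widetilde p\|_{s,\O}\lesssim h^{s}\|\boldsymbol{f}\|_{0,\O}$.

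Collecting the three bounds yields $\|(\widetilde{\bT}-\widetilde{\bT}_h)\boldsymbol{f}\|_{0,\O}\lesssim h^{s}\|\boldsymbol{f}\|_{0,\O}$ for both the $[\mathbb{P}_k]^n$-$\mathbb{P}_k$-$\mathbb{RT}_k$ and the $[\mathbb{P}_k]^n$-$\mathbb{P}_k$-$\mathbb{BDM}_{k+1}$ schemes, since the only scheme-dependent ingredient — the interpolation error for the pseudostress — satisfies the same order-$s$ estimate \eqref{eq:H0} in either family. The only point that requires a little care, and the main (mild) obstacle, is the handling of the $\bdiv$-part of the $\mathbb{H}(\bdiv)$-norm for $\widetilde{\bsig}$ when $s<1$: one must make sure the interpolation operators $\Pi_h^{\mathbb{RT}}$, $\Pi_h^{\mathbb{BDM}}$ are actually well defined on $[\mathbb{H}^s(\O)]^{n\times n}\cap\mathbb{H}(\bdiv,\O)$ for such $s$ and that the commuting-diagram estimates \eqref{asympdivRT}, \eqref{asympdiv} (with $\bdiv\widetilde{\bsig}=\boldsymbol{f}$ only in $[\L^2(\O)]^n$, i.e. $r=0$) still deliver the clean bound $\lesssim h^{s}\|\boldsymbol{f}\|_{0,\O}$; this is exactly what \eqref{asymp00RT}, \eqref{asymp00} and \eqref{eq:H0} were set up to provide. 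Since the argument is identical in structure to Corollary \ref{cor:app_TT1} we omit the repeated computations.
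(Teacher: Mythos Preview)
Your proposal is correct and follows exactly the approach the paper indicates: the paper itself omits the proof entirely, stating only that ``the proof follows the same arguments of corollary above, so we skip the details,'' and your write-up spells out precisely those arguments (second estimate of Lemma~\ref{lm:app}, regularity \eqref{eq:reg_u_if} via Remark~\ref{remark_1}, and the approximation properties \eqref{eq:H0}--\eqref{eq:Qbu}, with the additional pressure term handled through $\widetilde p=-\tfrac1n\tr(\widetilde{\bsig})\in\H^s(\O)$). Your extra care about the low-regularity range for the interpolants is appropriate and consistent with the tools the paper has set up.
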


\section{Convergence and Error estimates}
\label{sec:conv}
In this section we will analyze the convergence of the mixed method and derive error estimates for the eigenvalues and eigenfunctions.
We remark that, for both of the numerical schemes considered in our paper, these results are valid. Hence,  to simplify the presentation  of our results, we will prove our results in a reduced formulation.
 Due the compactness of
$\bT$ and $\widetilde{\bT}$, the convergence of the eigenvalues is obtained by means of the classic theory.  From now on we concentrate on the case associated with the reduced formulation.  The following result is a consequence of the convergence in norm between $\bT$ and $\bT_h$, and states that the method does not introduce spurious eigenvalues.

\begin{theorem}
\label{thm:spurious_free}
Let $V\subset\mathbb{C}$ be an open set containing $\sp(\bT)$. Then, there exists $h_0>0$ such that $\sp(\bT_h)\subset V$ for all $h<h_0$.
\end{theorem}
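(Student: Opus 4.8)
The plan is to obtain Theorem~\ref{thm:spurious_free} as a classical consequence of convergence in norm of the discrete solution operators. The key input is that, by Corollary~\ref{cor:app_TT1}, for every $\boldsymbol{f}\in\mathbf{Q}^{\bu}$ one has $\|(\bT-\bT_h)\boldsymbol{f}\|_{0,\O}\lesssim h^{s}\|\boldsymbol{f}\|_{0,\O}$ with $s\in(0,1)$ independent of $h$ and of $\boldsymbol{f}$; dividing by $\|\boldsymbol{f}\|_{0,\O}$ and taking the supremum over $\mathbf{Q}^{\bu}$ yields $\|\bT-\bT_h\|_{\mathcal{L}(\mathbf{Q}^{\bu})}\lesssim h^{s}\to 0$ as $h\to 0$. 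Hence $\bT_h\to\bT$ in operator norm.

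Next I would invoke the standard spectral perturbation theory for bounded operators (see, e.g., \cite{MR606505} or \cite{MR2652780}): if a sequence of bounded operators converges in norm to $\bT$, then for any open set $V\subset\mathbb{C}$ with $\sp(\bT)\subset V$ there is $h_0>0$ such that $\sp(\bT_h)\subset V$ for all $h<h_0$. Concretely, the complement $\mathbb{C}\setminus V$ is closed and contains no point of $\sp(\bT)$; since $\sp(\bT)=\{0\}\cup\{\mu_k\}_{k\in\mathbb{N}}$ is bounded (because $\bT$ is compact) and $\mu_k\to 0$, the intersection of $\mathbb{C}\setminus V$ with any fixed large ball is a compact set $F$ on which the resolvent $z\mapsto (z\mathbb{I}-\bT)^{-1}$ is well defined and continuous, hence uniformly bounded, say $\|(z\mathbb{I}-\bT)^{-1}\|\le M$ for all $z\in F$. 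For such $z$ and for $h$ small enough that $\|\bT-\bT_h\|<1/M$, a Neumann-series argument shows $z\mathbb{I}-\bT_h=(z\mathbb{I}-\bT)\bigl(\mathbb{I}-(z\mathbb{I}-\bT)^{-1}(\bT-\bT_h)\bigr)$ is invertible, so $z\notin\sp(\bT_h)$. Since all operators $\bT_h$ have norm bounded uniformly in $h$ (by the stability estimate following the definition of $\bT_h$, $\|\widehat{\bsig}_h\|_{\bdiv,\O}+\|\widehat{\bu}_h\|_{0,\O}\lesssim\|\boldsymbol{f}\|_{0,\O}$ with constant independent of $h$), $\sp(\bT_h)$ is contained in a fixed ball for all $h$, so only the part of $\mathbb{C}\setminus V$ inside that ball, i.e. $F$, needs to be excluded. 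Combining, $\sp(\bT_h)\cap(\mathbb{C}\setminus V)=\emptyset$ for $h<h_0$, which is the claim.

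I do not expect a serious obstacle here: the statement is a textbook corollary once norm convergence $\|\bT-\bT_h\|\to 0$ is in hand, and that convergence has already been established in Corollary~\ref{cor:app_TT1} via the regularity estimate \eqref{eq:reg_u_if} and the approximation properties \eqref{eq:H0} and \eqref{eq:Qbu}. The only point requiring a little care is the uniform boundedness of $\sp(\bT_h)$, needed so that "excluding $\mathbb{C}\setminus V$" reduces to excluding a compact set; this follows immediately from the $h$-independent stability bound for the discrete source problem. The same argument applies verbatim to $\widetilde{\bT}$ and $\widetilde{\bT}_h$, so the result covers both discrete formulations; as announced in the text, we carry it out only for the reduced formulation.
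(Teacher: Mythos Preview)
Your proposal is correct and follows exactly the approach the paper intends: the paper does not give a detailed proof of Theorem~\ref{thm:spurious_free} but simply states that it ``is a consequence of the convergence in norm between $\bT$ and $\bT_h$'', which is precisely what you establish via Corollary~\ref{cor:app_TT1} and then exploit through the standard resolvent/Neumann-series perturbation argument. Your added care about the uniform boundedness of $\sp(\bT_h)$ is a correct and welcome detail that the paper omits.
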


 We recall the definition of the resolvent operator of $\bT$ and $\bT_h$ respectively:
\begin{gather*}
	(z\bI-\bT)^{-1}\,:\, \mathbf{Q}^{\bu} \to \mathbf{Q}^{\bu}\,, \quad z\in\mathbb{C}\setminus \sp(\bT), \\
		(z\bI-\bT_h)^{-1}\,:\, \mathbf{Q}_h^{\bu} \to \mathbf{Q}_h^{\bu}\,, \quad z\in\mathbb{C}\setminus\sp(\bT_h) .
\end{gather*}

 As a consequence of Corollary \ref{cor:app_TT1}, if $\kappa \in (0,1)$ is an isolated eigenvalue of $\bT$ with multiplicity $m$, and $\mathcal{E}$ its associated eigenspace, then, there exist $m$
eigenvalues $\kappa_{h}^{(1)},...,\kappa_{h}^{(m)}$ of $\bT_{h}$, repeated according to their respective multiplicities, which converge to $\kappa$. Let $\mathcal{E}_{h}$ be the direct sum of their corresponding associated eigenspaces (see \cite{MR0203473}).

We recall the definition of the \textit{gap} $\hdel$ between two closed
subspaces $\CM$ and $\CN$ of $\L^2(\O)$:
$$
\hdel(\CM,\CN)
:=\max\big\{\delta(\CM,\CN),\delta(\CN,\CM)\big\},
$$
where
$$
\delta(\CM,\CN)
:=\sup_{x\in\CM:\ \left\|x\right\|_{0,\O}=1}
\left(\inf_{y\in\CN}\left\|x-y\right\|_{0,\O}\right).
$$

\begin{theorem}
\label{millar2015}
There exists strictly positive constant C, such that 
\begin{equation*}
	\widehat{\delta} ( \mathcal{E}, \mathcal{E}_{h} )\lesssim\,h^{ \min\{ s,k+1 \} } \quad \mbox{and} \quad | \mu-\mu_{h}(i) | \lesssim\,h^{ \min\{s,k+1 \} }.
\end{equation*}
\begin{proof}
	 As consequence of Corollary \ref{cor:app_TT1}, $\bT_{h}$  converges in norm to $\bT$ as $h$ goes to zero. Then, the proof follows as a direct consequence of  
	\cite[Theorem 7.1 and Theorem 7.3]{MR1115235} and  using the 
regularity from Theorem \ref{thrm:spec_char_T}.
\end{proof}
\end{theorem}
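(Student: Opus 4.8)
The plan is to reduce the statement to a direct application of the abstract spectral-approximation theory for compact operators, using the norm convergence of $\bT_h$ to $\bT$ that was already harvested in Corollary \ref{cor:app_TT1}. The skeleton is standard (Babu\v{s}ka--Osborn / Boffi), so most of the work is bookkeeping: identify the right constants, invoke the right theorem, and feed in the regularity exponent $s$ coming from Theorem \ref{thrm:spec_char_T}.

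First I would recall that, by Corollary \ref{cor:app_TT1}, for every $\boldsymbol{f}\in\mathbf{Q}^{\bu}$ one has $\|(\bT-\bT_h)\boldsymbol{f}\|_{0,\O}\lesssim h^{s}\|\boldsymbol{f}\|_{0,\O}$, hence $\|\bT-\bT_h\|_{\mathcal{L}(\mathbf{Q}^{\bu})}\to 0$; moreover both operators are self-adjoint and compact with respect to the $[\L^2(\O)]^n$ inner product. Fix an isolated eigenvalue $\kappa=1/\lambda\in(0,1)$ of $\bT$ with multiplicity $m$ and associated eigenspace $\mathcal{E}$, and let $\mathcal{E}_h$ be the span of the $m$ discrete eigenfunctions whose eigenvalues $\kappa_h^{(1)},\dots,\kappa_h^{(m)}$ converge to $\kappa$, as guaranteed right before the statement. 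The classical results \cite[Theorem 7.1]{MR1115235} (gap estimate for invariant subspaces) and \cite[Theorem 7.3]{MR1115235} (eigenvalue estimate) then give
\begin{equation*}
\widehat{\delta}(\mathcal{E},\mathcal{E}_h)\lesssim \|(\bT-\bT_h)|_{\mathcal{E}}\|_{0,\O},
\qquad
|\mu-\mu_h(i)|\lesssim \|(\bT-\bT_h)|_{\mathcal{E}}\|_{0,\O},
\end{equation*}
where in the self-adjoint case the eigenvalue bound is in fact the first power of the operator-norm gap (no squared term is needed, and no spectral projection subtleties arise). Here I write $\mu$, $\mu_h(i)$ for the eigenvalues of the original problem, i.e. $\mu=1/\kappa$, $\mu_h(i)=1/\kappa_h^{(i)}$; since these are bounded below away from zero uniformly in $h$, the perturbation of $1/\kappa$ is controlled by that of $\kappa$.

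The last step is to replace the abstract right-hand side $\|(\bT-\bT_h)|_{\mathcal{E}}\|_{0,\O}$ by the explicit rate. Because $\mathcal{E}$ is finite-dimensional and consists of eigenfunctions $\bu$ with the extra regularity $\|\bsig\|_{s,\O}+\|\bu\|_{1+s,\O}\lesssim\|\bu\|_{0,\O}$ from Theorem \ref{thrm:spec_char_T}, the argument of Corollary \ref{cor:app_TT1}---best approximation via Lemma \ref{lm:app} together with the interpolation estimates \eqref{eq:H0} and \eqref{eq:Qbu}---yields $\|(\bT-\bT_h)\boldsymbol{f}\|_{0,\O}\lesssim h^{\min\{s,k+1\}}\|\boldsymbol{f}\|_{0,\O}$ for $\boldsymbol{f}\in\mathcal{E}$, the exponent $\min\{s,k+1\}$ accounting for the polynomial degree $k$ of the scheme. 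Combining this with the two abstract bounds gives exactly $\widehat{\delta}(\mathcal{E},\mathcal{E}_h)\lesssim h^{\min\{s,k+1\}}$ and $|\mu-\mu_h(i)|\lesssim h^{\min\{s,k+1\}}$, with constants independent of $h$. The only real subtlety---hence the point I would state carefully rather than gloss---is the passage from the reduced operator $\bT$ to the eigenvalues of the actual Stokes problem and the claim that one may use the plain (unsquared) exponent: this is precisely where self-adjointness of $\bT$ and $\bT_h$ is essential, and it is the reason the same estimate governs both the eigenfunction gap and the eigenvalue error. The identical reasoning applied to $\widetilde{\bT}$, $\widetilde{\bT}_h$ and the mixed scheme (now also invoking \eqref{eq:Qp} for the pressure) covers the non-reduced formulation, so it suffices to argue in the reduced setting as announced.
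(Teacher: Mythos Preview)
Your proposal is correct and follows essentially the same approach as the paper: invoke Corollary~\ref{cor:app_TT1} to obtain norm convergence of $\bT_h$ to $\bT$, apply the abstract Babu\v{s}ka--Osborn results \cite[Theorems~7.1 and~7.3]{MR1115235}, and feed in the regularity from Theorem~\ref{thrm:spec_char_T}. Your write-up is simply more explicit about the bookkeeping (the restriction to $\mathcal{E}$, the role of self-adjointness, and the passage from $\kappa$ to $\mu=1/\kappa$), but the logical skeleton is identical.
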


The next result provides a double order of convergence for the eigenvalues. 
\begin{theorem}
There exists a strictly positive constant $h_0$ such that, for $h<h_0$ there holds
\begin{equation*}
|\lambda-\lambda_h|\lesssim h^{2\min\{s,k+1\}},
\end{equation*}
where the hidden constant is independent of $h$.
\end{theorem}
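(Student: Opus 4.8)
The plan is to derive the double-order estimate for the eigenvalues from the standard spectral approximation theory for self-adjoint compact operators, exploiting the self-adjointness of $\bT$ and the convergence in norm already established. First I would recall the classical result (as in \cite{MR1115235} or Babu\v{s}ka--Osborn) that, for a self-adjoint operator, if $\kappa=1/\lambda$ is an eigenvalue of $\bT$ with eigenspace $\mathcal{E}$ and $\kappa_h=1/\lambda_h$ a discrete eigenvalue converging to it, then
\begin{equation*}
|\kappa-\kappa_h| \lesssim \sup_{\bu\in\mathcal{E},\,\|\bu\|_{0,\O}=1}\big| ((\bT-\bT_h)\bu,\bu)_{0,\O}\big| + \|(\bT-\bT_h)|_{\mathcal{E}}\|^2,
\end{equation*}
so that the task reduces to showing that the symmetric term $((\bT-\bT_h)\bu,\bu)_{0,\O}$ is itself of order $h^{2\min\{s,k+1\}}$ for eigenfunctions $\bu\in\mathcal{E}$.

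The key step is therefore a duality/Galerkin-orthogonality argument bounding $((\bT-\bT_h)\bu,\bv)_{0,\O}$ for $\bu,\bv\in\mathcal{E}$. Writing $(\widehat{\bsig},\widehat{\bu})=(\bsig,\bu)/\lambda$ for the solution of \eqref{eq1_source}--\eqref{eq2_source} with data $\bu$, and $(\widehat{\bsig}_h,\widehat{\bu}_h)$ for its discrete counterpart, one has $(\bT-\bT_h)\bu=\widehat{\bu}-\widehat{\bu}_h$. Using the mixed structure \eqref{eq1_source}--\eqref{eq2_source}, \eqref{reduced_disc_source1}--\eqref{reduced_disc_source2}, the Galerkin orthogonality $a_0(\widehat{\bsig}-\widehat{\bsig}_h,\btau_h)+b(\btau_h,\widehat{\bu}-\widehat{\bu}_h)=0$ and $b(\widehat{\bsig}-\widehat{\bsig}_h,\bv_h)=0$, I would introduce the auxiliary problem with data $\bv$ (solution $(\bsig^\ast,\bu^\ast)$ and its discretization) and insert best-approximants to obtain a cross term of the form
\begin{equation*}
((\bT-\bT_h)\bu,\bv)_{0,\O} \lesssim \|\widehat{\bsig}-\btau_h\|_{\bdiv,\O}\,\|\bsig^\ast-\btau_h^\ast\|_{\bdiv,\O} + \|\widehat{\bu}-\bv_h\|_{0,\O}\,\|\bsig^\ast-\btau_h^\ast\|_{\bdiv,\O} + (\text{symmetric terms}),
\end{equation*}
i.e.\ a product of two first-order approximation errors. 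Invoking the regularity estimate \eqref{eq:reg_u_if} (with $\bu^\ast,\bsig^\ast$ inheriting $\H^{1+s}$ and $\H^s(\bdiv)$ regularity since the data lies in the finite-dimensional, hence smooth, eigenspace) together with the interpolation bounds \eqref{eq:H0} and \eqref{eq:Qbu}, each factor is $O(h^{\min\{s,k+1\}})$, giving the claimed $h^{2\min\{s,k+1\}}$ for the symmetric term. Combined with Corollary \ref{cor:app_TT1}, which already gives $\|(\bT-\bT_h)|_{\mathcal{E}}\|^2\lesssim h^{2s}$, and the relation $|\lambda-\lambda_h|=|\lambda\lambda_h|\,|\kappa-\kappa_h|$ with $\lambda_h$ bounded, the theorem follows. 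Since the two discrete schemes share the same abstract structure, the same argument applies verbatim to $\widetilde{\bT}$, $\widetilde{\bT}_h$ with the extra pressure best-approximation term, which by \eqref{eq:Qp} is again $O(h^{\min\{s,k+1\}})$.

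The main obstacle will be the careful bookkeeping in the duality argument: one must insert the right discrete test functions to exploit Galerkin orthogonality on both the $a_0$-form and the $b$-form, handle the fact that $\bT_h$ maps into $\mathbf{Q}_h^{\bu}$ while $\bT$ does not (so $(\bT-\bT_h)\bu$ is not orthogonal to $\mathbf{Q}_h^{\bu}$ in an immediately usable way), and confirm that the auxiliary adjoint solution enjoys the same elliptic regularity $s$ — which is where the self-adjointness of the continuous problem and the smoothness of eigenfunctions from Theorem \ref{thrm:spec_char_T} are essential. Once the symmetric term is shown to be quadratic, the rest is a direct citation of the abstract spectral theory.
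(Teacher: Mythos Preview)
Your plan is sound, but the paper takes a shorter, more elementary route that avoids both the Babu\v{s}ka--Osborn machinery and the duality bookkeeping you flag as the main obstacle. Instead of working at the level of the solution operator, the paper rewrites the continuous and discrete eigenproblems in the combined form $A(\mathbf{U},\mathbf{V})=-\lambda(\bu,\bv)$, with $A((\bsig,\bu),(\btau,\bv)):=a_0(\bsig,\btau)+b(\btau,\bu)+b(\bsig,\bv)$, and then invokes the classical algebraic identity
\[
(\lambda-\lambda_h)\,\|\bu_h\|_{0,\O}^2 \;=\; A(\mathbf{U}-\mathbf{U}_h,\mathbf{U}-\mathbf{U}_h) \;+\; \lambda\,\|\bu-\bu_h\|_{0,\O}^2,
\]
which holds because $A$ is symmetric and the discrete pair $\mathbf{U}_h$ is an admissible test in the continuous problem. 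Both terms on the right are manifestly quadratic in the eigenfunction error $(\bsig-\bsig_h,\bu-\bu_h)$, so a single appeal to the boundedness of $A$, the approximation properties \eqref{eq:H0}--\eqref{eq:Qbu}, and the regularity of Theorem~\ref{thrm:spec_char_T} yields the double order directly. Your operator-theoretic approach reaches the same conclusion and has the merit of being formulation-agnostic (it works purely from $\bT$, $\bT_h$ without re-assembling the saddle-point form), but it requires the Aubin--Nitsche cross-term manipulation for $((\bT-\bT_h)\bu,\bv)$ that the paper's identity sidesteps in one line.
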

\begin{proof}
Let $(\lambda,\bsig,\bu)$ solution of problem \eqref{eq1_source}--\eqref{eq2_source}  and let $(\lambda_{h},\bsig_{h},\bu_{h})$ be solution of problem \eqref{reduced_disc_source1}--\eqref{reduced_disc_source2} with $\|\bu_{h}\|_{0,\O}=1$. For simplicity, we define 
$$\mathbf{U}:=(\bsig,\bu),\,\,\, \mathbf{U}_h:=(\bsig_h,\bu_h),\,\,\,
\mathbf{V}:=(\btau,\bv),\,\,\,\mathbf{V}_h:=(\btau_h,\bv_h).$$
Then, we rewrite  problems \eqref{eq1_source}--\eqref{eq2_source} and   \eqref{reduced_disc_source1}--\eqref{reduced_disc_source2} as follows: Find  $(\lambda,\mathbf{U})$ and $(\lambda_h,\mathbf{U}_h)$  solutions of the following eigenvalue problems
\begin{equation*}
A(\mathbf{U},\mathbf{V})=\lambda(\bu,\bv),\quad
A(\mathbf{U}_h,\mathbf{V}_h)=\lambda_h(\bu,\bv_h),
\end{equation*}
where $A(\cdot,\cdot)$ is the symmetric and bounded bilinear forms defined by 
$$A(\mathbf{U},\mathbf{V})=A((\bsig,\bu),(\btau,\bv)):=a_{0}(\bsig,\btau)+b(\btau,\bu)+b(\bsig,\bu).$$
Moreover, we have the following classic identity
	\begin{equation*}
		(\lambda-\lambda_{h})(\bu_{h}, \bu_{h})=A(\boldsymbol{U}-\boldsymbol{U}_{h}, \boldsymbol{U}-\boldsymbol{U}_{h})+\lambda\, (\bu-\bu_{h}, \bu-\bu_{h}).
	\end{equation*}
The proof is completed by taking absolute value on both sides of the above equation, the triangular inequality, and the approximation properties \eqref{eq:H0}--\eqref{eq:Qbu}, together with the additional regularity provided by Theorem \ref{thrm:spec_char_T}.
%

\end{proof}
\section{Numerical experiments}
\label{sec:numerics}

In this section we report some numerical tests in order to assess the performance of the proposed mixed element method, in the computation of the eigenvalues of problem \eqref{eq1h}--\eqref{eq2h}. In all our experiments we consider the boundary condition $\bu=0$ and $\mu=1/2.$ 

We have implemented the discrete eigenvalue problem in a FEniCS code and the orders of convergence have been computed with a least-square fitting. 

The schemes are performed in different domains as bidimensional convex and non convex domains and a three dimensional domain. For all the geometric configurations we compute the lowest eigenvalues and convergence orders. For the two dimensional domains we prove the schemes with polynomials degrees $k=0,1,2$ and for the 3-D domain only for $k=0$ due to the 
machine memory. With the computed results at hand, we compare the schemes that only differ on the $\H(\div)$ finite element space. In particular, in the first test we show that the reduced scheme \eqref{reduced_disc_source1}--\eqref{reduced_disc_source2} gives the same numerical results as \eqref{eq1h}--\eqref{eq2h}. This allows to perform the rest of the tests by choosing only one of them, that  in our case is \eqref{eq1h}--\eqref{eq2h}.  

In each test we also report plots of the associated eigenfunctions, in particular the velocity fields and pressure fluctuations. Moreover, in several experiments we consider the relative errors $e_{\lambda_i}\,i=1,2,3,4$ for different choices of $k$, where
$$
e_{\lambda_i}:=\frac{\vert \lambda_{h_i}-\lambda_{extr_i}\vert}{\vert \lambda_{extr_i}\vert}.
$$

Finally, we  denote by $e_{\lambda_i}(\mathbb{RT})$ and $e_{\lambda_i}(\mathbb{BDM})$ the relative errors obtained using $[\mathbb{P}_k]^{n}\text{-}\mathbb{P}_k\text{-}\mathbb{RT}_k$   and $[\mathbb{P}_k]^{n}\text{-}\mathbb{P}_k\text{-}\mathbb{BDM}_{k+1}$   schemes, respectively.
\subsection{Test 1: Square}

In this test we consider as computational domain the square $\O_S:=(-1,1)^2$ and the meshes for the following tests are  like the presented in Figure \ref{meshes_square}.
\begin{figure}[H]
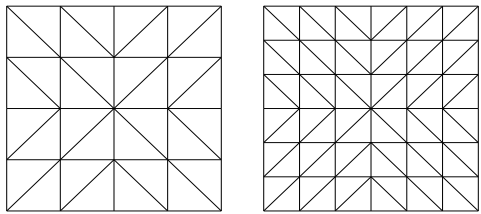

	\begin{center}
		\begin{minipage}{10cm}
			\centering\includegraphics[height=4.6cm, width=3.3cm]{figures/meshes/MU4.pdf}
			\centering\includegraphics[height=4.6cm, width=3.3cm]{figures/meshes/MU6.pdf}
                   \end{minipage}
		\caption{Examples of the meshes used in the unit square. The left figure represents a mesh for $N=4$ and the right one for $N=6$.}
		\label{meshes_square}
	\end{center}
\end{figure}

We observe that  the convexity of this domain is convex, leads to the sufficiently smooth eigenfunctions for the Stokes eigenvalue problem. This fact implies that the order of convergence will be optimal. For this test, we have considered as polynomial degrees $k=0, 1, 2$ together with the studied numerical schemes.

\begin{table}[H]
{\footnotesize
\begin{center}
\begin{tabular}{c |c c c c |c| c| c| c}
\toprule
 $k $        & $N=10$             &  $N=20$         &   $N=30$         & $N=40$ & Order & $\lambda_{extr}$&\cite{MR3335223} &\cite{MR2473688} \\ 
 \midrule
 & 12.61618& 12.96634  &   13.03637   & 13.05313 & 2.00& 13.08484 &13.0860&13.086    \\
 & 21.08840& 22.63791  &   22.85202   & 22.93446 & 2.40 &22.99702 &23.0308&23.031   \\
 \multirow{2}{0.7cm}{0}
 & 21.33183& 22.69036  &   22.88083   & 22.93810&  2.46&22.99245&23.0308&23.031    \\
 & 27.96811& 31.27226   &   31.70100  & 31.81983&  2.59& 31.93357&32.0443&32.053   \\
 & 33.42538& 37.66786   &   38.18478  & 38.32443&  2.69&38.44565 &38.5252&38.532   \\

          \hline

 &13.08698&13.08620&13.08617 &13.08617&  4.56&13.08617& 13.0860&13.086      \\
 &23.04310&23.03182&23.03123 &23.03114&  4.04&23.03109& 23.0308&23.031     \\
\multirow{2}{0.7cm}{1} 
 &23.04310&23.03182&23.03123 &23.03114&  4.04&23.03109& 23.0308&23.031    \\
 &32.07944&32.05400&32.05270 &32.05249&  4.07&32.05239& 32.0443&32.053     \\
 &38.60095&38.53594&38.53227 &38.53165&  3.92&38.53134& 38.5252&38.532     \\

\hline

 & 13.08528 &13.08616&13.08617&13.08617  &  5.84&13.08617& 13.0860&13.086   \\
 & 23.03116 &23.03109&23.03109&23.03109  &  6.00 &23.03109& 23.0308&23.031      \\
\multirow{2}{0.7cm}{2}   
 & 23.03116 &23.03109&23.03109&23.03109 &   6.00 &23.03109 & 23.0308&23.031     \\
 & 32.05268 &32.05239&32.05239&32.05239 &   6.00 &32.05239& 32.0443&32.053    \\
 & 38.53256 &38.53138&38.53136&38.53136 &   5.97 &38.53136& 38.5252&38.532    \\

\bottomrule             
\end{tabular}
\end{center}}
\caption{Lowest computed eigenvalues for polynomial degrees $k=0, 1, 2$ with the $[\mathbb{P}_k]^{n}\text{-}\mathbb{P}_k\text{-}\mathbb{RT}_k$   scheme. }
\label{tabla:square}
\end{table}

\begin{table}[H]
	{\footnotesize
		\begin{center}
			\begin{tabular}{c |c c c c |c| c| c| c}
				\toprule
				$k $        & $N=10$             &  $N=20$         &   $N=30$         & $N=40$ & Order & $\lambda_{extr}$&\cite{MR3335223} &\cite{MR2473688} \\ 
				\midrule
				& 13.18205& 13.10744   &   13.09534   & 13.09127 & 2.21 & 13.08688 &13.0860&13.086    \\
				& 22.59086& 22.92419   &   22.98366   & 23.00442 & 2.06 & 23.02944 &23.0308&23.031   \\
				\multirow{2}{0.7cm}{0}
				& 22.59086& 22.92419   &   22.98366  & 23.00442  & 2.06 & 23.02944 &23.0308&23.031    \\
				& 31.52148& 31.92201   &   31.99384  & 32.01930  & 2.04 & 32.05042 &32.0443&32.053   \\
				& 36.97903& 38.18216   &   38.37946  & 38.44657  & 2.19 & 38.51958 &38.5252&38.532   \\
				
				\hline
				
				&13.08698&13.08620&13.08617 &13.08617&  4.56&13.08617& 13.0860&13.086      \\
				&23.04310&23.03182&23.03123 &23.03114&  4.04&23.03122& 23.0310&23.031     \\
				\multirow{2}{0.7cm}{1} 
				&23.04310&23.03182&23.03123 &23.03114& 4.04 &23.03109& 23.0308&23.031    \\
				&32.07944&32.05400&32.05270 &32.05249& 4.07 & 32.05239& 32.0443&32.053     \\
				&38.60095&38.53594&38.53227 &38.53165& 3.92 &38.53134& 38.5252&38.532     \\
				
				\hline
				
				& 13.08615 &13.08617&13.08617&13.08617  &  4.75 &13.08617& 13.0860&13.086   \\
				& 23.03116 &23.03109&23.03109&23.03109  &  6.00 &23.03109& 23.0308&23.031      \\
				\multirow{2}{0.7cm}{2}   
				& 23.03116 &23.03109&23.03109&23.03109 &  6.00  &23.03110 & 23.0308&23.031     \\
				& 32.05268 &32.05239&32.05239&32.05239 &  6.00 &32.05239& 32.0443&32.053    \\
				& 38.53256 &38.53138&38.53136&38.53136 &  5.92 &38.53136& 38.5252&38.532    \\

				\bottomrule             
			\end{tabular}
	\end{center}}
	\caption{Lowest computed eigenvalues for polynomial degrees $k=0, 1, 2$ with the $[\mathbb{P}_k]^{n}\text{-}\mathbb{RT}_k$  scheme. }
	\label{tabla:square-scheme2}
\end{table}

\begin{table}[H]
	{\footnotesize
		\begin{center}
			\begin{tabular}{c |c c c c |c| c| c| c}
				\toprule
				$k $        & $N=10$             &  $N=20$         &   $N=30$         & $N=40$ & Order & $\lambda_{extr}$&\cite{MR3335223} &\cite{MR2473688} \\ 
				\midrule
				& 13.39520& 13.16477  &   13.12123   & 13.10591 & 1.97& 13.08574 &13.0860&13.086    \\
				& 23.74378& 23.22000  &   23.11593   & 23.07899 & 1.89 &23.02641 &23.0308&23.031   \\
				\multirow{2}{0.7cm}{0}& 24.19514& 23.32856  &   23.16384   & 23.10587&  1.96&23.02865&23.0308&23.031    \\
				& 33.73344& 32.50272   &   32.25523  & 32.16703& 1.87& 32.03920&32.0443&32.053   \\
				& 41.15209& 39.23059   &   38.84532  & 38.70858&  1.88&38.51262 &38.5252&38.532   \\
				
				\hline
				
				&13.08919&13.08636&13.08621 &13.08618&  3.99&13.08617& 13.0860&13.086      \\
				&23.04441&23.03195&23.03126 &23.03115&  3.96&23.03109& 23.0308&23.031     \\
				\multirow{2}{0.7cm}{1} &23.05331&23.03253&23.03138 &23.03118&  3.95&23.03109& 23.0308&23.031    \\
				&32.10055&32.0555&32.05302 &32.05259& 3.92& 32.05238& 32.0443&32.053     \\
				&38.61259&38.53671&38.53243 &38.53170&  3.92&38.53134& 38.5252&38.532     \\
				
				\hline
				
				& 13.08618 &13.08617&13.08617&13.08617  &  6.16&13.08617& 13.0860&13.086   \\
				& 23.03117 &23.03109&23.03109&23.03109  &  6.04 &23.03109& 23.0308&23.031      \\
				\multirow{2}{0.7cm}{2}   & 23.03128 &23.03110&23.03109&23.03109 &  6.01  &23.03109 & 23.0308&23.031     \\
				& 32.05303 &32.05240&32.05239&32.05239 &   6.02 &32.05239& 32.0443&32.053    \\
				& 38.53239 &38.53138&38.53136&38.53136 &   5.92 &38.53136& 38.5252&38.532    \\

				\bottomrule             
			\end{tabular}
	\end{center}}
	\caption{Lowest computed eigenvalues for polynomial degrees $k=1, 2, 3.$ with the $[\mathbb{P}_k]^{n}\text{-}\mathbb{P}_k\text{-}\mathbb{BDM}_{k+1}$   scheme. }
	\label{tabla:square-BDM}
\end{table}

\begin{table}[H]
	{\footnotesize
		\begin{center}
			\begin{tabular}{c |c c c c |c| c| c| c}
				\toprule
				$k $        & $N=10$             &  $N=20$         &   $N=30$         & $N=40$ & Order & $\lambda_{extr}$&\cite{MR3335223} &\cite{MR2473688} \\ 
				\midrule
				& 13.46029& 13.18088  &  13.12837  & 13.10993 & 1.98& 13.08589 &13.0860&13.086    \\
				& 24.18596& 23.32433  &  23.16178  & 23.10467 & 1.97& 23.02910 &23.0308&23.031   \\
				\multirow{2}{0.7cm}{0}
				& 24.18596& 23.32433  &  23.16178  & 23.10467 & 1.97& 23.02910 &23.0308&23.031    \\
				& 34.23489& 32.61702  &  32.30485  & 32.19470 & 1.94& 32.04581 &32.0443&32.053   \\
				& 41.75299& 39.35261  &  38.89728  & 38.73736 & 1.96& 38.52295 &38.5252&38.532   \\
				
				\hline
				
				&13.08997&13.08642&13.08622 &13.08618&  3.93&13.08617& 13.0860&13.086      \\
				&23.05092&23.03240&23.03135 &23.03118&  3.92&23.03109& 23.0308&23.031     \\
				\multirow{2}{0.7cm}{1} 
				&23.05092&23.03240&23.03135 &23.03118 & 3.92&23.03109& 23.0308&23.031    \\
				&32.10848&32.05619&32.05315 &32.05263 & 3.88&32.05237& 32.0443&32.053     \\
				&38.61788&38.53707&38.53250 &38.53172 & 3.92&38.53134& 38.5252&38.532     \\
				
				\hline
				
				& 13.08619 &13.08617&13.08617&13.08617 &  6.09 &13.08617& 13.0860&13.086   \\
				& 23.03128 &23.03110&23.03109&23.03109 &  5.97 &23.03109& 23.0308&23.031      \\
				\multirow{2}{0.7cm}{2}  
				& 23.03128 &23.03110&23.03109&23.03109 &  5.97 &23.03109 & 23.0308&23.031     \\
				& 32.05323 &32.05240&32.05239&32.05239 &  5.91 &32.05239& 32.0443&32.053    \\
				& 38.53245 &38.53138&38.53136&38.53136 &  5.92 &38.53136& 38.5252&38.532    \\

				\bottomrule             
			\end{tabular}
	\end{center}}
	\caption{Lowest computed eigenvalues for polynomial degrees $k=1, 2, 3.$ with the $[\mathbb{P}_k]^{n}\text{-}\mathbb{BDM}_{k+1}$   scheme. }
	\label{tabla:square-BDM-scheme2}
\end{table}

In Table \ref{tabla:square} we report the first five eigenvalues computed with the $[\mathbb{P}_k]^{n}\text{-}\mathbb{P}_k\text{-}\mathbb{RT}_k$   scheme, considering different meshes and polynomial degrees. In the column 
$\lambda_{extr}$ we report extrapolated values, obtained with a lest square fitting, which we compare with two very well known references that 
have deal with the same domain. We observe that our extrapolated values are close to those in \cite{MR3335223,MR2473688} and that the rates of convergence
are as we expect. In fact, we notice that for $k=0$, the order of approximation  is clearly $h^2$, meanwhile for $k>0$ the observed order is  
close to $h^{2(k+1)}$. This increased order is expectable for high order methods, as it happen, for example,  in DG methods (see for instance \cite{MR2324460, MR4077220}). In Table \ref{tabla:square-scheme2}, the results from using the $[\mathbb{P}_k]^{n}\text{-}\mathbb{RT}_k$   scheme are provided. We observe that similar results are obtained when compared with the results from Table \ref{tabla:square}.

On the other hand, Table \ref{tabla:square-BDM} shows the computed eigenvalues when using the $[\mathbb{P}_k]^{n}\text{-}\mathbb{P}_k\text{-}\mathbb{BDM}_{k+1}$   scheme,  where we observe that an optimal rate of convergence $O(h^{2(k+1)})$ is reached for high order elements. For instance, in Figure \ref{figura:error-cuadrado} we observe that, except for the noise present in the error slopes, the scheme allows to stay on the optimal rate of convergence. This is compared with Table \ref{tabla:square-BDM-scheme2}, where we have the computed eigenvalues from the $[\mathbb{P}_k]^{n}\text{-}\mathbb{BDM}_{k+1}$   scheme. As before, the results are considerably similar with those from Table \ref{tabla:square-BDM}. Hence, we conclude that, although the full and reduced numerical schemes studied are not equivalent, they yield to the same numerical results. To complete the experiment, we present in Figure \ref{figura:modos-cuadrado} the first, third and fourth lowest computed  eigenfunctions on the square domain, and in Figure \ref{figura:error-cuadrado} we present the error behavior on the chosen numerical schemes.

\begin{figure}[H]
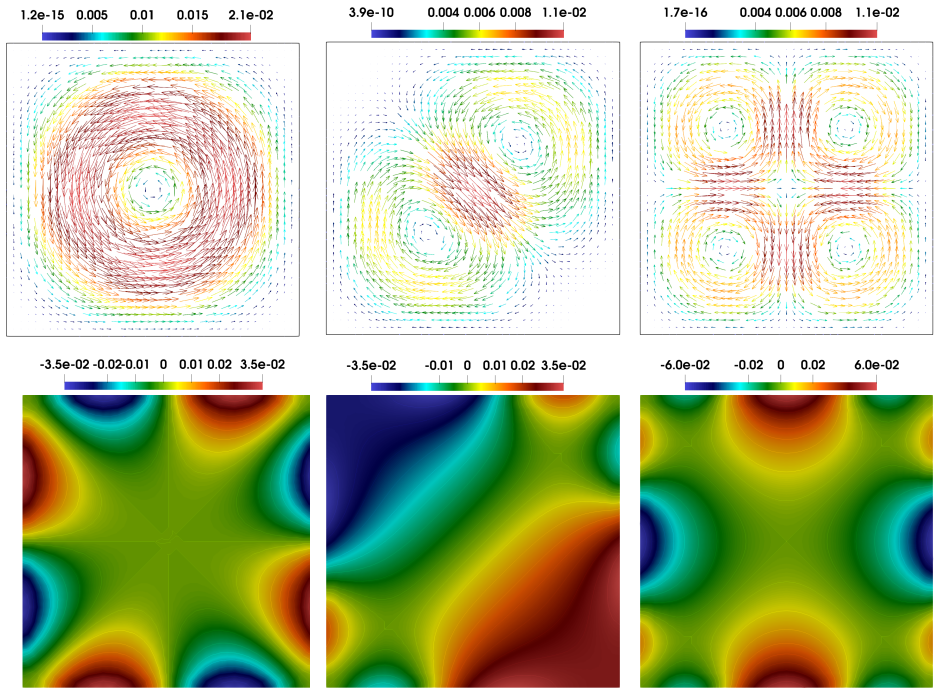

\centering\begin{minipage}{0.32\linewidth}
	\centering\includegraphics[scale=0.098]{figures/square/u_h0.png}
\end{minipage}
\begin{minipage}{0.32\linewidth}
	\centering\includegraphics[scale=0.098]{figures/square/u_h2.png}
\end{minipage}
\begin{minipage}{0.32\linewidth}
	\centering\includegraphics[scale=0.098]{figures/square/u_h3.png}
\end{minipage}\\
\begin{minipage}{0.32\linewidth}
	\centering\includegraphics[scale=0.098]{figures/square/p_h0.png}
\end{minipage}
\begin{minipage}{0.32\linewidth}
	\centering\includegraphics[scale=0.098]{figures/square/p_h2.png}
\end{minipage}
\begin{minipage}{0.32\linewidth}
	\centering\includegraphics[scale=0.098]{figures/square/p_h3.png}
\end{minipage}
\caption{Approximate velocity field $\bu_h$ (top row) and pressure $p_h$ (bottom row), corresponding to the first, third and fourth lowest eigenvalues in the square domain.}
\label{figura:modos-cuadrado}
\end{figure}

\begin{figure}[H]
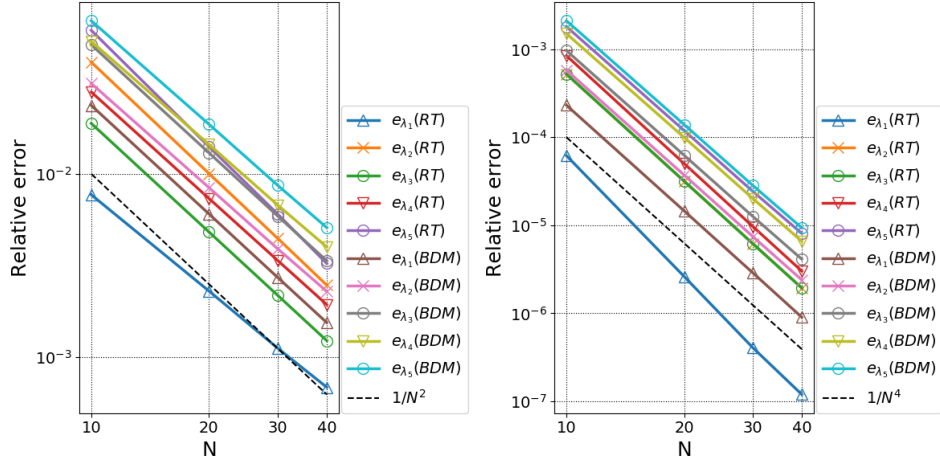

	\begin{minipage}{0.49\linewidth}
		\centering\includegraphics[scale=0.42]{figures/square/errors/error_square1.png}
	\end{minipage}
	\begin{minipage}{0.49\linewidth}
		\centering\includegraphics[scale=0.42]{figures/square/errors/error_square2.png}
	\end{minipage}
%
\caption{Comparison of the eigenvalues error behavior in the square domain when using $[\mathbb{P}_k]^{n}\text{-}\mathbb{P}_k\text{-}\mathbb{RT}_k$   and $[\mathbb{P}_k]^{n}\text{-}\mathbb{P}_k\text{-}\mathbb{BDM}_{k+1}$   schemes. The experiment considers polynomials of degree $k=0$ (left) and $k=1$ (right).}
\label{figura:error-cuadrado}
\end{figure}
%

 \subsection{Test 2: Circular domain}
 In this test we consider the unitary circle as computational domain, which we define by $\O_C:=\{(x,y)\in\mathbb{R}^2\,:\, x^2+y^2\leq 1\}$. The relevance of this experiment is that we are approximating a curved domain with triangular meshes, which lead to a variational crime.  In Figure \ref{meshes_circle} we present examples of the quasi-uniform triangular meshes considered to approximate the circular domain.
 
The fact that we are approximating a curved domain by means of a polygonal one is reflected in the numerical experiments presented below in Table \ref{tabla:circle} where, independent of the polynomial degree, the order of convergence is $\mathcal{O}(h^2)$ for all $k\geq 0$.
 
 \begin{figure}[H]
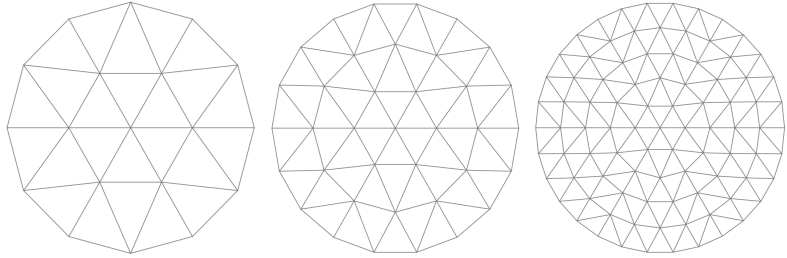

	\begin{center}
		\begin{minipage}{14cm}
			\centering\includegraphics[height=3.4cm, width=3.4cm]{figures/meshes//circular1.png}
			\centering\includegraphics[height=3.4cm, width=3.4cm]{figures/meshes/circular2.png}
			\centering\includegraphics[height=3.4cm, width=3.4cm]{figures/meshes/circular3.png}
                   \end{minipage}
		\caption{Meshes used in the circular domain.}
		\label{meshes_circle}
	\end{center}
\end{figure}
The results from using the $[\mathbb{P}_k]^{n}\text{-}\mathbb{P}_k\text{-}\mathbb{BDM}_{k+1}$  scheme are described in Table \ref{tabla:circle-BDM}, where similar rates of convergence are observed. We recall that, in both cases, $N$ represents the mesh resolution such that the number of elements is $6N^2$. In Figure \ref{figura:modos-circulo} we present the approximated eigenfunctions for  the  lowest frequencies. We further describe the results obtained in Figure \ref{figura:error-circulo}, where we observe the experimental rates obtained, which are in good agreement with those predicted by theory. 
 \begin{table}[H]
{\footnotesize
\begin{center}
\begin{tabular}{c |c c c c |c| c| c }
\toprule
 $k $        & $N=20$             &  $N=30$         &   $N=40$         & $N=50$ & Order & $\lambda_{extr}$ &\cite{ MR4077220} \\ 
 \midrule
                                    &  14.94827 & 14.79867 & 14.74712&  14.72354   &2.04&  14.68251  &14.68345  \\
                                    &  26.81747 & 26.56803 & 26.48211&   26.44329  &2.05&  26.37559  &26.37840   \\
 \multirow{2}{0.7cm}{0}&  26.81821 & 26.56845 & 26.48262&  26.44365   &2.06&  26.37683  &26.37862   \\
                                    &  41.32838 & 40.98177 & 40.85915&  40.80453  &2.01 & 40.70533   &40.71434    \\
                                    &  41.34096 & 40.98359 & 40.86093&  40.80487  &2.05 & 40.70809   &40.71606   \\

          \hline

                                    &  14.94196&  14.79448&  14.74448 & 14.72169&   2.08& 14.68323 &14.68345    \\
                                    &  26.84091&  26.57657&  26.48686 & 26.44594&   2.08& 26.37704 &26.37840     \\
\multirow{2}{0.7cm}{1} &  26.84099&  26.57662&  26.48687 & 26.44595&   2.08& 26.37703 &26.37862	\\
                                    &  41.42501&  41.01797&  40.87964 & 40.81652&   2.08& 40.71046 &40.71434	 \\
                                    &  41.42543&  41.01805&  40.87966 & 40.81654&   2.08& 40.71037 &40.71606	 \\

          \hline

                                      &  14.94315&  14.79487&  14.74464 & 14.72177 &  2.09 & 14.68361  & 14.68345  \\
                                      &  26.84301&  26.57727&  26.48715 & 26.44610 &  2.08 & 26.37680  &26.37840 	\\
\multirow{2}{0.7cm}{2}   &   26.84303&  26.57728&  26.48716 & 26.44610 &  2.08 & 26.37680 &26.37862	\\
                                      &  41.42807&  41.01900&  40.88008 & 40.81675 &  2.08 & 40.71012 &40.71434	 \\
                                      &  41.42814&  41.01902&  40.88008 & 40.81676 &  2.08 & 40.71010 &40.71606	 \\

\bottomrule             
\end{tabular}
\end{center}}
\caption{Lowest computed eigenvalues for polynomial degrees $k=0, 1, 2.$ using the  $[\mathbb{P}_k]^{n}\text{-}\mathbb{P}_k\text{-}\mathbb{RT}_k$   scheme.}
\label{tabla:circle}
\end{table}
 \begin{table}[H]
	{\footnotesize
		\begin{center}
			\begin{tabular}{c |c c c c |c| c| c }
				\toprule
				$k $        & $N=20$             &  $N=30$         &   $N=40$         & $N=50$ & Order & $\lambda_{extr}$ &\cite{ MR4077220} \\ 
				\midrule
				&  14.82469 & 14.71768 & 14.69784&  14.69090   &2.00&  14.68199  &14.68345  \\
				&  26.77392 & 26.47427 & 26.41889&  26.39951  &2.00&  26.37450  &26.37840   \\
\multirow{2}{0.7cm}{0}&  26.77392 & 26.47427 & 26.41889&  26.39951   &2.00&  26.37450  &26.37862   \\
				&  41.56881 & 40.92423 & 40.80343&  40.76105  &1.98 & 40.70545   &40.71434    \\
				&  41.56881 & 40.92423 & 40.80343&  40.76105  &1.98 & 40.70545   &40.71606   \\
				
				\hline
				
				&  14.70933&  14.68872&  14.68496 & 14.68365&   2.02& 14.68199 &14.68345    \\
				&  26.42481&  26.38682&  26.38000 & 26.37764&   2.05& 26.37473 &26.37840     \\
\multirow{2}{0.7cm}{1} & 26.42481&  26.38682&  26.38000 & 26.37764&   2.05& 26.37703 &26.37862	\\
				&  40.78741&  40.72552&  40.71483 & 40.71115&   2.11& 40.70686 &40.71434	 \\
				&  40.78741&  40.72552&  40.71483 & 40.71115&   2.11& 40.70686 &40.71606	 \\
				
				\hline
				
				&  14.70930&  14.68873&  14.68496 & 14.68365 &  2.02 & 14.68200  & 14.68345  \\
				&  26.42370&  26.38677&  26.38000 & 26.37764 &  2.02 & 26.37467  &26.37840 	\\
\multirow{2}{0.7cm}{2}   &   26.42370&  26.38677&  26.38000 & 26.37764 &  2.02 & 26.37467 &26.37862	\\
				&  40.78222&  40.72523&  40.71478 & 40.71113 &  2.02 & 40.70655 &40.71434	 \\
				&  40.78222&  40.72523&  40.71478 & 40.71113 &  2.02 & 40.70655 &40.71606	 \\

				\bottomrule             
			\end{tabular}
	\end{center}}
	\caption{Lowest computed eigenvalues for polynomial degrees $k=0, 1, 2,$ when using the $[\mathbb{P}_k]^{n}\text{-}\mathbb{P}_k\text{-}\mathbb{BDM}_{k+1}$   scheme. }
	\label{tabla:circle-BDM}
\end{table}

\begin{figure}[H]
	\begin{minipage}{0.31\linewidth}
		\centering\includegraphics[scale=0.095]{figures/circle/u_h0.png}
	\end{minipage}
	\begin{minipage}{0.3\linewidth}
		\centering\includegraphics[scale=0.095]{figures/circle/u_h2.png}
	\end{minipage}
	\begin{minipage}{0.3\linewidth}
		\centering\includegraphics[scale=0.095]{figures/circle/u_h3.png}
	\end{minipage}\\
	\begin{minipage}{0.31\linewidth}
		\centering\includegraphics[scale=0.098]{figures/circle/p_h0.png}
	\end{minipage}
	\begin{minipage}{0.3\linewidth}
		\centering\includegraphics[scale=0.098]{figures/circle/p_h2.png}
	\end{minipage}
	\begin{minipage}{0.3\linewidth}
		\centering\includegraphics[scale=0.098]{figures/circle/p_h3.png}
	\end{minipage}
	\caption{Approximate velocity field $\bu_h$ (top row) and pressure $p_h$ (bottom row), corresponding to the first, third and fourth lowest eigenvalues in the unit circular domain.}
	\label{figura:modos-circulo}
\end{figure}

\begin{figure}[H]
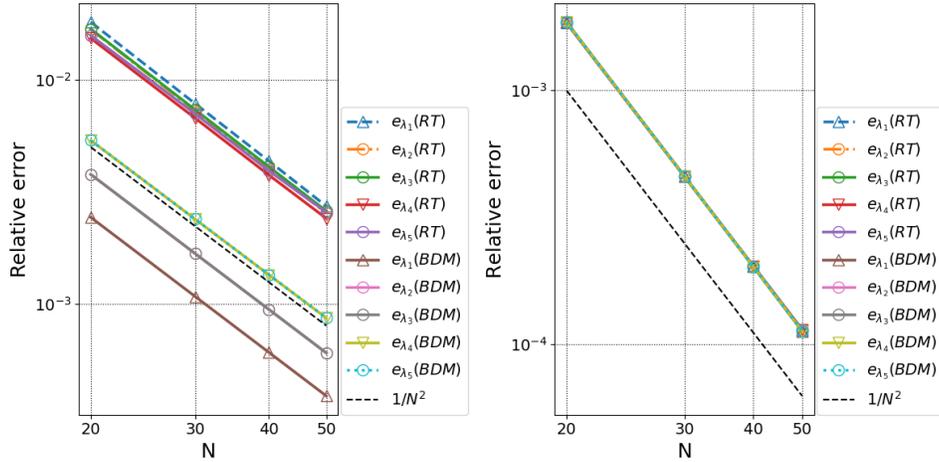

	\begin{minipage}{0.49\linewidth}
		\centering\includegraphics[scale=0.42]{figures/circle/errors/error_circle1.png}
	\end{minipage}
	\begin{minipage}{0.49\linewidth}
		\centering\includegraphics[scale=0.42]{figures/circle/errors/error_circle3.png}
	\end{minipage}
	\caption{Comparison of the eigenvalues error behavior in the circle when using $[\mathbb{P}_k]^{n}\text{-}\mathbb{P}_k\text{-}\mathbb{RT}_k$   and $[\mathbb{P}_k]^{n}\text{-}\mathbb{P}_k\text{-}\mathbb{BDM}_{k+1}$   schemes. The experiment considers polynomials of degree $k=0$ (left) and $k=2$ (right).}
	\label{figura:error-circulo}
\end{figure}

\subsection{Test 3. The L-Shape domain.} In this numerical test we consider an L-shape domain given by $\Omega_L:=(-1,1)\times(-1,1) \backslash [-1,0]\times [-1,0]$. In Table \ref{tabla:l-shape} we report the results when using $[\mathbb{P}_k]^{n}\text{-}\mathbb{P}_k\text{-}\mathbb{RT}_k$   and $[\mathbb{P}_k]^{n}\text{-}\mathbb{P}_k\text{-}\mathbb{BDM}_{k+1}$   schemes to solve the discrete eigenvalue problem. The table show the corresponding order of convergence together with the extrapolated values of the five lowest computed eigenvalues. Note that the singularity produced by the reentrant corner yields to a rate of convergence around 1.7 (see \cite{MR2473688} for instance), as can be seen in the lowest computed eigenvalue. In fact, we observe that the order of convergence is $s\approx 2\min\{r,k+1\}$, with $1.7\leq r\leq 2$, as is predictable in this geometry. For better visualization, we explore this result in the relative error plots in Figure \ref{figura:error-lshape}, where the slopes are compared with the $1/N^{\sqrt{3}}$, which is the best order possible with uniform refinement.  

We end this test reporting plots of  the velocity fields and pressure fluctuations in Figure \ref{figura:modos-l-shape}, where, as is expectable, high gradients around the singularity are observed.

 \begin{table}[H]
	{\footnotesize
		\begin{center}
			\begin{tabular}{c|c c c c |c| c }
				\toprule
				Scheme&$N=9$             &  $N=15$         &   $N=20$         & $N=35$ & Order & $\lambda_{extr}$  \\ 
				\midrule
				\multirow{5}{0.19\linewidth}{$[\mathbb{P}_k]^{n}\text{-}\mathbb{P}_k\text{-}\mathbb{RT}_k$  }&29.43565 &  30.83700 &  31.16193 &  31.62598  & 1.59 & 31.89457\\
				&34.98077 &  36.28132 &  36.50660 &  36.83669  & 2.03 & 36.94231\\
				&40.70064 &  41.43833 &  41.62290 &  41.83014  & 1.73 & 41.94524\\
				&46.83830 &  48.22776 &  48.47328 &  48.80875  & 2.07 & 48.91635\\
				&52.08483 &  53.96541 &  54.48404 &  55.02474  & 1.65 & 55.37238\\
				\midrule
				\multirow{5}{0.19\linewidth}{$[\mathbb{P}_k]^{n}\text{-}\mathbb{P}_k\text{-}\mathbb{BDM}_{k+1}$  }&32.59542 &  32.24970 &  32.14635 &  32.06144  & 1.75 & 32.00483\\
				&38.76953 &  37.57884 &  37.32081 &  37.11240  & 2.26 & 37.03276\\
				&44.76985 &  42.88018 &  42.46067 &  42.10765  & 2.19 & 41.96744\\
				&52.09587 &  50.19205 & 49.67367 & 49.20827  & 1.81 & 48.93475\\
				&58.84979 &  56.72442 &  56.20364 &  55.63553  & 1.79 & 55.33628\\
				\bottomrule             
			\end{tabular}
	\end{center}}
	\caption{Lowest computed eigenvalues for polynomial degrees $k=0$ in the L-shape domain.}
	\label{tabla:l-shape}
\end{table}
\begin{figure}[H]
	\centering\includegraphics[scale=0.4]{figures/l-shape/errors/error_lshape1_1.png}
	\caption{Comparison of the eigenvalues error behavior in the L-shape domain when using $[\mathbb{P}_k]^{n}\text{-}\mathbb{P}_k\text{-}\mathbb{RT}_k$   and $[\mathbb{P}_k]^{n}\text{-}\mathbb{P}_k\text{-}\mathbb{BDM}_{k+1}$   schemes. The experiment considers polynomials of degree $k=0$.}
	\label{figura:error-lshape}
\end{figure}
\begin{figure}[H]
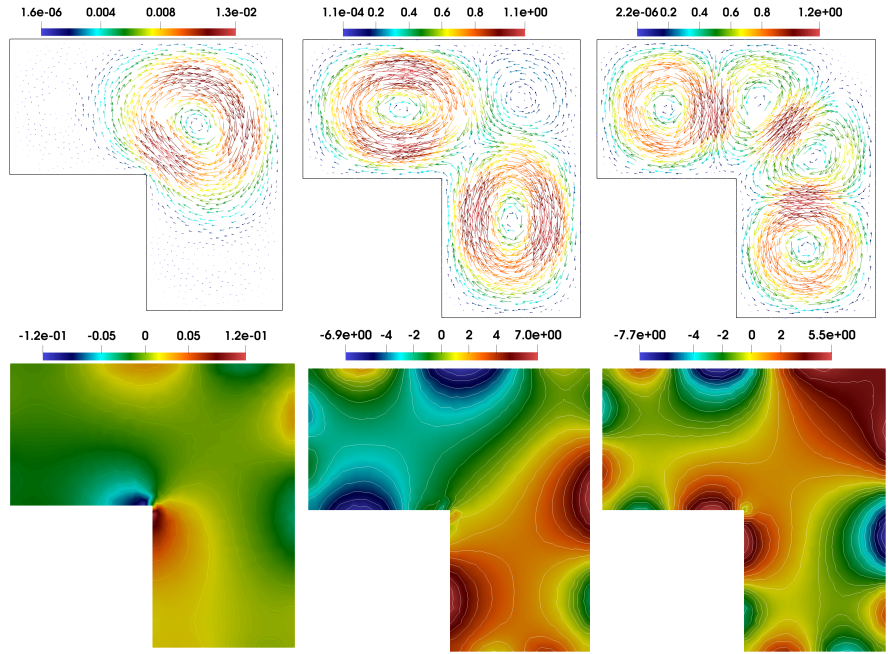

	\begin{minipage}{0.3\linewidth}
		\centering\includegraphics[scale=0.091]{figures/l-shape/u_h0.png}
	\end{minipage}
	\begin{minipage}{0.3\linewidth}
		\centering\includegraphics[scale=0.093]{figures/l-shape/u_h2.png}
	\end{minipage}
	\begin{minipage}{0.3\linewidth}
		\centering\includegraphics[scale=0.093]{figures/l-shape/u_h3.png}
	\end{minipage}\\
	\begin{minipage}{0.3\linewidth}
		\centering\includegraphics[scale=0.095]{figures/l-shape/p_h0.png}
	\end{minipage}
	\begin{minipage}{0.3\linewidth}
		\centering\includegraphics[scale=0.098]{figures/l-shape/p_h2.png}
	\end{minipage}
	\begin{minipage}{0.3\linewidth}
		\centering\includegraphics[scale=0.098]{figures/l-shape/p_h3.png}
	\end{minipage}
	\caption{Approximate velocity field $\bu_h$ (top row) and pressure $p_h$ (bottom row), corresponding to the first, third and fourth lowest eigenvalues in the L-shape domain.}
	\label{figura:modos-l-shape}
\end{figure}

\subsection{3-D test: Cubic and spherical domain.} In this test we further assess the proposed schemes by consider two different three dimensional scenarios. The first case considers cube in the region $\Omega=(0,1)^3$. Here, $N$ represents the number of cell per side such that the number of tetrahedron is $6(N+1)^3$. In Figure \ref{FIG:meshescube}
we present examples of the meshes used in the cube domain.
\begin{figure}[H]
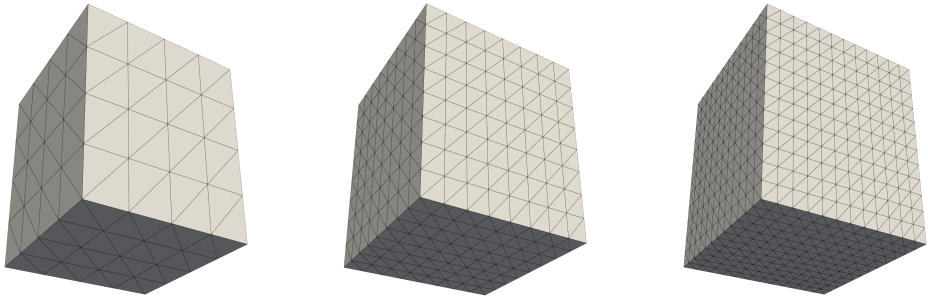

	\begin{center}
		\begin{minipage}{14cm}
			\hspace{-1.1 cm}\centering\includegraphics[height=4.4cm, width=4.4cm]{figures/meshes//meshc1.png}
			\centering\includegraphics[height=4.4cm, width=4.4cm]{figures/meshes/meshc2.png}
			\centering\includegraphics[height=4.4cm, width=4.4cm]{figures/meshes/meshc3.png}
                   \end{minipage}
		\caption{Examples of the meshes used in the unit cube. The left figure represents a mesh for $N=4$, the figure in for $N=8$  and the right figure for $N=12$.}
		\label{FIG:meshescube}
	\end{center}
\end{figure}

In the second scenario we consider the unitary sphere $$\O_S:=\{(x,y,z)\in\mathbb{R}^3\,:\, x^2+y^2+z^2\leq 1\}.$$ For this case, $N$ represents the mesh resolution such that $N\sim 1/h$. We remark that this test consists into approximate a curved domain by means of  tetrahedral meshes. In Figure \ref{FIG:meshessphere} we present several meshes used in the experiment.
\begin{figure}[H]
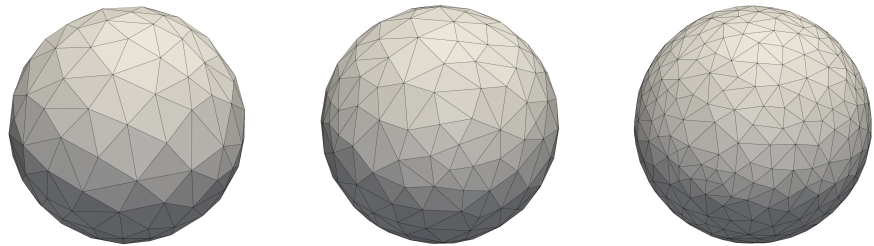

		\begin{minipage}{0.32\linewidth}
				\centering\includegraphics[scale=0.075]{figures/meshes/meshs8.png}
			\end{minipage}
		\begin{minipage}{0.32\linewidth}
				\centering\includegraphics[scale=0.075]{figures/meshes/meshs10.png}
			\end{minipage}
		\begin{minipage}{0.32\linewidth}
			\centering\includegraphics[scale=0.075]{figures/meshes/meshs14.png}
		\end{minipage}
			\caption{Examples of the meshes used in the unitary sphere. The left figure represents a mesh for $N=8$, the middle figure for $N=10$  and the right figure for $N=14$.}
			\label{FIG:meshessphere}
\end{figure}
For simplicity, both scenarios only consider the lowest order of approximation ($k=0$). 

For the unit cube, the results from using the $[\mathbb{P}_k]^{n}\text{-}\mathbb{P}_k\text{-}\mathbb{RT}_k$   and $[\mathbb{P}_k]^{n}\text{-}\mathbb{P}_k\text{-}\mathbb{BDM}_{k+1}$ numerical schemes are reported in Table \ref{tabla:cube}. In figure \ref{figura:erro-cubo} we present the relative error plot for the approximated eigenvalues compared with the extrapolated ones on each table. It notes that the expected rate of convergence is observed. Together with this, the second and fourth lowest eigenfunctions corresponding to $\bu_h$ and $p_h$ are depicted in Figure \ref{figura:modos-cubo}.

On the other hand, the unit sphere case is described in Table \ref{tabla:sphere}. Here, we show that the proposed methods work perfectly and deliver the expected double order of convergence for both schemes, which is observed in Figure \ref{figura:erro-esfera}. For completeness, in Figure \ref{figura:modos-esfera} we present plots of the approximated velocity fields and pressure fluctuations associated with the second and fourth lowest eigenvalues. As in the circular domain, we observe an equally pressure distribution on the boundary for the second eigenvalue.

 \begin{table}[H]
	{\footnotesize
		\begin{center}
			\begin{tabular}{c| c c c c |c| c }
				\toprule
				Scheme&$N=6$             &  $N=8$         &   $N=10$         & $N=12$ & Order & $\lambda_{extr}$  \\ 
				\midrule
				\multirow{5}{0.19\linewidth}{$[\mathbb{P}_k]^{n}\text{-}\mathbb{P}_k\text{-}\mathbb{RT}_k$  }
				&62.35998&  62.23777&  62.19632 & 62.18064 & 3.29 & 62.15973\\
				&62.69715&  62.42145&  62.31185 & 62.25999 & 2.60 & 62.17340\\
				&62.69715&  62.42145&  62.31185 & 62.25999 & 2.60 & 62.17340\\
				&84.69821&  89.79683&  91.41328 & 91.49049 & 4.33 & 91.99133\\
				&89.14796&  91.13378&  92.15949 & 91.49049 & 5.43 & 91.86755\\
				\midrule
				\multirow{5}{0.19\linewidth}{$[\mathbb{P}_k]^{n}\text{-}\mathbb{P}_k\text{-}\mathbb{BDM}_{k+1}$  }
				&65.90006 &  64.32034 & 63.56386 & 63.14558  & 1.88 & 62.11709\\
				&66.63208 &  64.74326 & 63.83797 & 63.33726  & 1.88 & 62.10739\\
				&66.63208 &  64.74326 & 63.83797 & 63.33726  & 1.88 & 62.10739\\
				&100.1074&  96.65063 & 94.93243 & 93.96083  & 1.75 & 91.35751\\
				&100.1074&  96.65063 & 94.93243 & 93.96083  & 1.75 & 91.35751\\
				\bottomrule                  
			\end{tabular}
	\end{center}}
	\caption{Lowest computed eigenvalues for polynomial degrees $k=0$ in the unit cube domain.}
	\label{tabla:cube}
\end{table}

\begin{figure}[H]
	\centering\includegraphics[scale=0.4]{figures/cube/errors.png}
	\caption{Comparison of the relative error behavior in the unit cube when using $[\mathbb{P}_k]^{n}\text{-}\mathbb{P}_k\text{-}\mathbb{RT}_k$   and $[\mathbb{P}_k]^{n}\text{-}\mathbb{P}_k\text{-}\mathbb{BDM}_{k+1}$   schemes. The experiment considers polynomials of degree $k=0$. }
	\label{figura:erro-cubo}
	\end{figure}


\begin{figure}[H]
	\begin{minipage}{0.45\linewidth}
		\centering\includegraphics[scale=0.1]{figures/cube/u_h1.png}
	\end{minipage}
	\begin{minipage}{0.45\linewidth}
		\centering\includegraphics[scale=0.1]{figures/cube/u_h3.png}
	\end{minipage}\\
	\begin{minipage}{0.45\linewidth}
	\centering\includegraphics[scale=0.1]{figures/cube/p_h1.png}
	\end{minipage}
	\begin{minipage}{0.45\linewidth}
	\centering\includegraphics[scale=0.1]{figures/cube/p_h3.png}
	\end{minipage}
	\caption{Approximated velocity field  $\bu_h$ (top) and pressure $p$ (bottom), corresponding to the second and fourth lowest eigenvalues in the unit cube domain.}
	\label{figura:modos-cubo}
\end{figure}

 \begin{table}[H]
{\footnotesize
\begin{center}
\begin{tabular}{c|c c c c |c| c }
\toprule
     Scheme&$N=8$             &  $N=10$         &   $N=12$         & $N=14$ & Order & $\lambda_{extr}$  \\ 
	 \midrule
	\multirow{5}{0.19\linewidth}{$[\mathbb{P}_k]^{n}\text{-}\mathbb{P}_k\text{-}\mathbb{RT}_k$  }&21.20335&  20.92074&  20.75254&  20.65936 & 1.83&  20.35082\\
	&21.21249&  20.93014&  20.75649&  20.66213  & 1.76 & 20.32925\\
	&21.26202&  20.93587&  20.76475&  20.66667 &  2.21& 20.42274\\
	&32.88974&  33.16111 & 33.37782 & 33.40790  & 2.30 & 33.63347\\
	&32.92827&  33.19610&  33.38327&  33.42272 &  2.43&  33.61269\\
	\midrule
	\multirow{5}{0.19\linewidth}{$[\mathbb{P}_k]^{n}\text{-}\mathbb{P}_k\text{-}\mathbb{BDM}_{k+1}$  }&22.25623&  21.60098&  21.21005&  20.99626 & 1.85&  20.29349\\
	&22.26758&  21.61050&  21.21633&  21.00327  & 1.85 & 20.29719\\
	&22.31279&  21.64527&  21.24160&  21.00398 &  1.68& 20.42274\\
	&37.33594&  35.95812 & 35.22679 & 34.82632  & 2.24 & 33.81667\\
	&37.35343&  36.01392&  35.23896&  34.83680 &  2.03&  33.62683\\
	\bottomrule                   
\end{tabular}
\end{center}}
\caption{Lowest computed eigenvalues for polynomial degrees $k=0$ in the unit sphere domain.}
\label{tabla:sphere}
\end{table}

\begin{figure}[H]
	\centering\includegraphics[scale=0.4]{figures/sphere/errors.png}
	\caption{Comparison of the relative error behavior in the unit sphere when using $[\mathbb{P}_k]^{n}\text{-}\mathbb{P}_k\text{-}\mathbb{RT}_k$  and $[\mathbb{P}_k]^{n}\text{-}\mathbb{P}_k\text{-}\mathbb{BDM}_{k+1}$   schemes. The experiment considers polynomials of degree $k=0$. }
	\label{figura:erro-esfera}
\end{figure}


\begin{figure}[H]
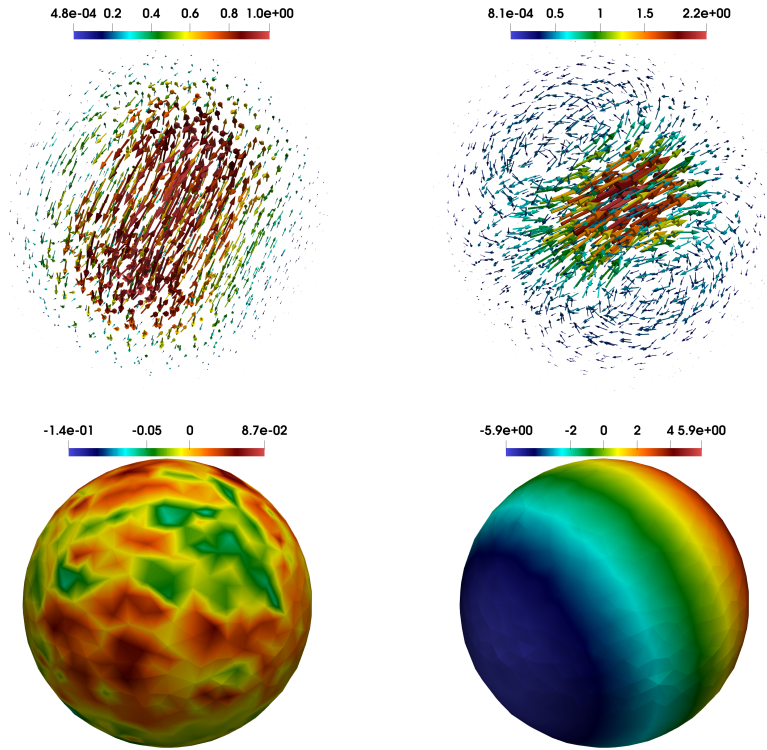

	\begin{minipage}{0.45\linewidth}
		\centering\includegraphics[scale=0.1]{figures/sphere/u_h1.png}
	\end{minipage}
	\begin{minipage}{0.45\linewidth}
		\centering\includegraphics[scale=0.1]{figures/sphere/u_h3.png}
	\end{minipage}\\
	\begin{minipage}{0.45\linewidth}
	\centering\includegraphics[scale=0.1]{figures/sphere/p_h1.png}
	\end{minipage}
	\begin{minipage}{0.45\linewidth}
	\centering\includegraphics[scale=0.1]{figures/sphere/p_h3.png}
	\end{minipage}
	\caption{Approximated velocity field $\bu_h$ (top) and pressure $p_h$ (bottom), corresponding to the second and fourth lowest eigenvalues in the unit sphere domain.}
	\label{figura:modos-esfera}
\end{figure}
%

\section{Conclusions}
From our analysis and numerical tests, we derive the following conclusions:
\begin{itemize}
\item The introduction of the pseudo-stress tensor allowed to propose a mixed finite element formulation for a Stokes eigenvalue problem that do not introduce spurious modes, and the analysis was possible thanks to definition of  appropriate solution operators. This allowed to obtain the respective approximation results along with theoretical convergence rates based on well-known finite element spaces.
\item The numerical schemes employed in our study
perform an accurate approximation of the eigenvalues and the associated eigenfunctions in two and three dimensions.
\item For curved domains both  methods works perfectly, even from the fact that we are considering polygonal/polyhedral meshes, depending in the dimension fo the domain.
\item For the lowest order in both numerical schemes ($k=0$) the double order of convergence is clearly
quadratic and, for $k\geq 1$, the$[\mathbb{P}_k]^{n}\text{-}\mathbb{P}_k\text{-}\mathbb{BDM}_{k+1}$   scheme seems to be more stable than the $[\mathbb{P}_k]^{n}\text{-}\mathbb{P}_k\text{-}\mathbb{RT}_k$   when the order fo convergence are computed. This is due the fact that, when $[\mathbb{P}_k]^{n}\text{-}\mathbb{P}_k\text{-}\mathbb{RT}_k$   is considered, the computed eigenvalues are more close between them than the scheme with BDM elements. This phenomenon is observable in the relative error plots.
\item In non convex domains, the results are the expectable due the singularity of the geometry. This is an interesting fact that motivates the analysis of adaptive schemes.
\end{itemize}

%
%
\bibliographystyle{siam}
\footnotesize
\bibliography{bib_LOQ}

\end{document}